\newtheorem{theorem}{Theorem}
\newtheorem{lemma}{Lemma}
\newtheorem{proposition}{Proposition}
\newtheorem{conjecture}{Conjecture\rm}
\newcounter{paraga}[section]
\begin{document}

\def\MP{\,{<\hspace{-.5em}\cdot}\,}
\def\SP{\,{>\hspace{-.3em}\cdot}\,}
\def\PM{\,{\cdot\hspace{-.3em}<}\,}
\def\PS{\,{\cdot\hspace{-.3em}>}\,}
\def\EP{\,{=\hspace{-.2em}\cdot}\,}
\def\PP{\,{+\hspace{-.1em}\cdot}\,}
\def\PE{\,{\cdot\hspace{-.2em}=}\,}
\def\N{\mathbb N}
\def\C{\mathbb C}
\def\Q{\mathbb Q}
\def\R{\mathbb R}
\def\T{\mathbb T}
\def\A{\mathbb A}
\def\Z{\mathbb Z}
\def\demi{\frac{1}{2}}

\begin{titlepage}
\author{Abed Bounemoura~\footnote{CNRS-IMPA UMI 2924  (abedbou@gmail.com)}}
\title{\LARGE{\textbf{Nekhoroshev's estimates for quasi-periodic time-dependent perturbations}}}
\end{titlepage}

\maketitle

\begin{abstract}
In this paper, we consider a Diophantine quasi-periodic time-dependent analytic perturbation of a convex integrable Hamiltonian system, and we prove a result of stability of the action variables for an exponentially long interval of time. This extends known results for periodic time-dependent perturbations, and partly solves a long standing conjecture of Chirikov and Lochak. We also obtain improved stability estimates close to resonances or far away from resonances, and a more general result without any Diophantine condition.
\end{abstract}

\section{Introduction and results}

\subsection{Introduction}

Let $n \geq 1$ be an integer, $D \subset \R^n$ an open bounded convex domain and $\T^n:=\R^n / (2\pi\Z)^n$. Consider a smooth Hamiltonian function $H$ defined on the domain $\T^n \times B$ of the form
\begin{equation}\label{aut}
H(\theta,I)=h(I)+\varepsilon f(\theta,I), \quad \varepsilon \geq 0, \quad (\theta,I)=(\theta_1,\dots,\theta_n,I_1,\dots,I_n) \in \T^n \times D, 
\end{equation} 
and its associated Hamiltonian system
\begin{equation*}
\begin{cases} 
\dot{\theta}_i(t)=\partial_{I_i} H(\theta(t),I(t))=\partial_{I_i} h(I(t))+ \varepsilon \partial_{I_i} f(\theta(t),I(t)), \\
\dot{I}_i(t)=- \partial_{\theta_i} H(\theta(t),I(t))=- \varepsilon \partial_{\theta_i} f(\theta(t),I(t)) 
\end{cases}
\quad 1 \leq i \leq n. 
\end{equation*} 
For $\varepsilon=0$, the system is stable in the sense that the action variables $I(t)$ of all solutions are constant, and all solutions are quasi-periodic. Now for $\varepsilon \neq 0$ but sufficiently small, a fundamental result of Nekhoroshev states that if the system is real-analytic and the integrable part $h$ satisfies a generic condition (called steepness), then the action variables $I(t)$ of all solutions are almost constant for an interval of time which is exponentially long with respect to the inverse of the perturbation. More precisely, the following estimates hold true along all solutions:
\begin{equation}\label{nek}
||I(t)-I(0)||:=\sqrt{\sum_{i=1}^n(I_i(t)-I_i(0))^2}\leq R_0\varepsilon^b, \quad |t|\leq T_0\exp\left(c\left(\frac{1}{\varepsilon}\right)^a\right) 
\end{equation}
for some positive constants $R_0,T_0,c,a$ and $b$. We refer to \cite{Nek77} and \cite{Nek79}. The most important constants appearing in the estimates~\eqref{nek} are undoubtedly the constants $a$ and $b$, which are called the stability exponents. The simplest class of steep integrable Hamiltonians are the convex (or quasi-convex) Hamiltonians, and these exponents depend then only on the number of degrees of freedom $n$. Nekhoroshev's original proof yielded the following dependence
\[ a \sim b \sim \frac{1}{n^2}.\]
Such values for the exponents were however much worse than the values
\[ a=b=\frac{1}{2n}\]
conjectured by Chirikov (\cite{Chi79}) on a basis of a heuristic argument and numerical simulations. This issue was later solved by Lochak (\cite{Loc92}): more precisely, Lochak-Neishtadt (\cite{LN92}) and independently Pöschel (\cite{Pos93}) proved that~\eqref{nek} holds true with the values
\[ a=b=\frac{1}{2n}. \]
More generally, it is proved in~\cite{BM11} that for any $0 \leq \delta \leq (2n(n-1))^{-1}$, one can choose
\[ a=\frac{1}{2(n-1)}-\delta, \quad b=\delta(n-1), \]
recovering the latter result by letting $\delta=(2n(n-1))^{-1}$. Examples of Arnold diffusion (\cite{Bes96}, \cite{Bes97}, \cite{Zha11}) show that in any event $a<(2(n-2))^{-1}$ so that in the convex case, the estimate~\eqref{nek} is, as far as the dependence on $a$ is concerned, quite sharp.

Now instead of an autonomous perturbation as in~\eqref{aut}, one may consider a periodic time-dependent perturbation, that is one looks at the Hamiltonian
\begin{equation}\label{nonautper1}
H(\theta,I)=h(I)+\varepsilon f(\theta,I,t), \quad \varepsilon \geq 0, \quad (\theta,I) \in \T^n \times B, \quad t \in \T. 
\end{equation}
Setting $t=\varphi \in \T$ and introducing a variable $J \in \R$ canonically conjugated to $\varphi$, it is equivalent to study the autonomous Hamiltonian
\begin{equation}\label{nonautper2}
H(\theta,I,\varphi,J)=h(I)+J+\varepsilon f(\theta,I,\varphi), \quad \varepsilon \geq 0, \quad (\theta,I) \in \T^n \times B, \quad (\varphi,J) \in \T \times \R. 
\end{equation}
Indeed, $(\theta(t),I(t),\varphi(t),J(t))$ is a solution of the system associated to~\eqref{nonautper2} if, and only if, $(\theta(t),I(t))$ is a solution of the system associated to~\eqref{nonautper1}. The Hamiltonian~\eqref{nonautper2} has now $n+1$ degrees of freedom, the integrable part $h(I)+J$ is no longer convex but it is quasi-convex: thus the results of \cite{LN92} and \cite{Pos93} apply, and the estimates~\eqref{nek} hold true for the Hamiltonian~\eqref{nonautper1} with
\[ a=b=\frac{1}{2(n+1)}. \]
Now a periodic time-dependent perturbation is nothing but a special case of a quasi-periodic time-dependent perturbation, and we may more generally consider a Hamiltonian of the form
\begin{equation}\label{nonautqper1}
H(\theta,I)=h(I)+\varepsilon f(\theta,I,t\alpha), \quad \varepsilon \geq 0, \quad (\theta,I) \in \T^n \times B, \quad t\alpha=t(\alpha_1,\dots,\alpha_m) \in \T^m 
\end{equation}   
where $\alpha \in \R^m$ is a vector which is assumed to be non-resonant, that is $k\cdot\alpha\neq 0$ for any non-zero $k \in \Z^m$ and where $\,\cdot\,$ denotes the Euclidean scalar product. We will assume actually that $\alpha$ satisfies a Diophantine condition: there exist $\gamma>0$ and $\tau \geq m-1$ such that 
\begin{equation}\label{dio}\tag{$\mathrm{Dio}_{\gamma,\tau}$}
|k\cdot\alpha|\geq \gamma|k|^{-\tau}, \quad k=(k_1,\dots,k_m) \in \Z^m\setminus\{0\}, \quad |k|:=|k_1|+\cdots |k_m|.
\end{equation} 
As before, setting $\varphi=t\alpha\in \T^m$ and introducing a vector $J=(J_1,\dots,J_m)\in \R^m$ canonically conjugated to $\varphi$, the Hamiltonian~\eqref{nonautqper1} is equivalent to
\begin{equation}\label{nonautqper2}
H(\theta,I,\varphi,J)=h(I)+\alpha\cdot J+\varepsilon f(\theta,I,\varphi), \quad \varepsilon \geq 0, \quad (\theta,I) \in \T^n \times B, \quad (\varphi,J) \in \T^m \times \R^m. 
\end{equation}   
The Hamiltonian~\eqref{nonautqper2}, in the special case $m=1$, reduces to Hamiltonian~\eqref{nonautper2}: indeed, $\alpha \in \R$ and by a scaling one may assume that $\alpha=1$, and moreover~\eqref{dio} is obviously satisfied for $\gamma=|\alpha|=1$ and $\tau=m-1=0$.

We can now state the most general form of a conjecture of Chirikov (see \cite{Chi79}, \cite{CV89} and \cite{CV96}), stated in a more precise manner by Lochak (\cite{LMS}).

\begin{conjecture}\label{conj}
The estimates~\eqref{nek} hold true for the Hamiltonian~\eqref{nonautqper2}, provided it is real-analytic and $h$ convex, with the exponents
\[ a=b=\frac{1}{2(n+1+\tau)}. \]
\end{conjecture}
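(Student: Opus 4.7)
\medskip
\noindent\textbf{Proof proposal.} My plan is to work with the autonomous form~\eqref{nonautqper2} and adapt the P\"oschel/Lochak--Neishtadt framework for quasi-convex Hamiltonians. The core difficulty is that $h(I)+\alpha\cdot J$ is convex only in $I$ and degenerate (linear) in $J$, so standard convex Nekhoroshev theorems do not apply directly. Two features of the problem save the day: the Diophantine condition~\eqref{dio} controls small divisors coming from purely $\varphi$-directed harmonics, and only the drift of $I$ (not of $J$) needs to be controlled, so an effectively $n$-dimensional convexity should suffice.

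The first step is an analytic normal form on resonant zones in the extended action space. Given an ultraviolet cutoff $K$ and a resonance threshold $\delta$, one partitions $D$ into zones $D_\Lambda$ indexed by resonant sublattices $\Lambda\subset\Z^n\times\Z^m$: on $D_\Lambda$, all harmonics $(k,\ell)$ with $|(k,\ell)|\leq K$ and $|k\cdot\omega(I)+\ell\cdot\alpha|\leq\delta$ lie in $\Lambda$, where $\omega=\nabla h$. The key observation is that by~\eqref{dio}, no purely vertical harmonic $(0,\ell)$ with $0<|\ell|\leq K$ can be resonant as soon as $\delta<\gamma K^{-\tau}$; thus the effective resonance structure in the $I$-direction is governed by the projection $\pi\Lambda\subset\Z^n$, and the presence of the degenerate $J$-term is absorbed into the small-divisor budget at the cost of a factor $K^\tau$. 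After averaging, one should obtain a normal form $H\circ\Phi=h(I)+\alpha\cdot J+\varepsilon g+R$ with $g$ Fourier-supported in $\Lambda$ and $R$ of order $\exp(-\sigma K)$, valid provided $\varepsilon K^{\tau+1}$ stays below the analyticity budget $\sigma$.

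Since $g$ is resonant with $\Lambda$, the normal-form dynamics should conserve the components of $I$ orthogonal to $\pi\Lambda$ up to the exponentially small remainder, and Lochak's convexity/energy argument applied to $h$ alone (not to $h+\alpha\cdot J$) then confines the $I$-excursion within a tube of width $\sim\sqrt{\varepsilon}$ around the plane $\pi\Lambda$. Balancing the constraints $\delta\sim\gamma K^{-\tau}$, $\varepsilon K^{\tau+1}\ll 1$, and $\delta\sim\sqrt{\varepsilon}$ on fully resonant zones yields $K\sim\varepsilon^{-1/(2(n+1+\tau))}$ and the claimed exponents $a=b=(2(n+1+\tau))^{-1}$. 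The main obstacle, and the reason the conjecture has long resisted a direct extension of the periodic proofs, is the geometric analysis of \emph{mixed} zones where $\Lambda$ strictly contains $\pi\Lambda\times\{0\}$: in the normal form the $I$-dynamics then couples to the $\varphi$-dynamics, and one must verify that the confinement deduced from the convexity of $h$ alone is not spoiled by drift reinjected through the $\varphi$-dependence of $g$. Carrying this out likely requires a simultaneous Diophantine analysis of the pair $(\omega(I),\alpha)$ and a careful redefinition of resonance zones relative to $\alpha$, rather than treating the two frequency components independently as in the purely autonomous or purely periodic settings.
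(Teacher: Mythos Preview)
The statement you are attempting to prove is Conjecture~\ref{conj}, which the paper explicitly does \emph{not} prove; it is left open (see the discussion immediately after the conjecture and again in Section~\ref{s8}). The paper's actual result, Theorem~\ref{thm1}, yields the weaker exponents $a=(2(n+1)(\tau+1))^{-1}$ and $b=((n+1)\tau+1)/(2(n+1)(\tau+1))$, and the author states plainly that whether the conjectured values can be reached is still unknown. So there is no proof in the paper to compare your attempt against; at best one can ask whether your sketch closes the gap. It does not.

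The concrete error is your claim that the Diophantine condition on $\alpha$ is ``absorbed into the small-divisor budget at the cost of a factor $K^\tau$''. This is correct only at rank one. In the paper's resonance geometry (Lemma~\ref{nonresonant}), when one passes from an admissible rank-$d$ module $\Lambda$ to a rank-$(d{+}1)$ extension $\Lambda_+$ that is \emph{not} admissible, the condition~\eqref{dio} must be applied not to a vector of size $K$ but to an integer vector $\Delta l_*\in\Z^m$ whose size is only controlled by $(d{+}1)K^{d+1}$ (it arises from Cramer's rule applied to a $d\times d$ integer system with entries of size $K$). The resulting small-divisor bound is of order $\gamma K^{-(d+1)\tau}$, not $\gamma K^{-\tau}$. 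Feeding this $d$-dependent loss through the hierarchy $\lambda_1,\dots,\lambda_n$ is exactly what converts the additive $n+1+\tau$ you are aiming for into the multiplicative $(n+1)(\tau+1)$ the paper obtains. The ``mixed zone'' obstacle you flag at the end is precisely this phenomenon; the paper does resolve it, but only at the price of the weaker exponents, and no mechanism is known that avoids the $K^{(d+1)\tau}$ loss. Your balancing heuristic simply presupposes the answer by inserting a single $K^\tau$ where the actual analysis forces $K^{(n+1)\tau}$.
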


Note that this conjecture is made plausible by the fact that in the periodic case, that is $m=1$ and $\tau=0$, it is a theorem. Yet for $m \geq 2$ and hence $\tau \geq 1$, it is an open question whether estimates~\eqref{nek} hold true for the Hamiltonian~\eqref{nonautqper2}, for some values of $a$ and $b$ whatsoever. 

It is the purpose of this article to solve this problem: we will prove that the estimates~\eqref{nek} hold true for the Hamiltonian~\eqref{nonautqper2}, provided it is real-analytic and $h$ convex, with the exponents
\begin{equation}\label{exposant}
a=\frac{1}{2(n+1)(1+\tau)}, \quad b=\frac{(n+1)\tau+1}{2(n+1)(1+\tau)}=a+\frac{\tau}{2(1+\tau)}.
\end{equation}
We refer to Theorem~\ref{thm1} below for a more precise statement. Concerning the values of the exponents we obtain, let us just make two comments (a more detailed discussion is contained in Section~\S\ref{s8}). First, for the periodic case ($m=1$, $\tau=0$), we also recover the known values of the exponents, so our result can also be considered as a ``correct" generalization of the periodic case. Then, in the non-periodic case ($m \geq 2$, $\tau \geq 1$), our exponent $a$ is substantially worse than the one of Conjecture~\ref{conj}, but at the same time our exponent $b$ is always much better: $b$ is close to $1/2$ (it is always strictly bigger than $1/4$), and when $n$ is fixed and $m$ (or $\tau$) becomes arbitrary large, it becomes arbitrarily close to $1/2$. Moreover, in the non-periodic case, $b$ is essentially independent of $n$ which is an interesting feature. In any case, it is therefore still an open question whether the values of the exponents of Conjecture~\ref{conj} can be reached or not.

In fact, we do obtain more general results. First, the perturbation will be allowed to depend also one the $J$ variables, that is we can replace $f(\theta,I,\varphi)$ in~\eqref{nonautqper2} by $f(\theta,I,\varphi,J)$, provided $f$ is bounded and real-analytic in $J$, when $J$ varies in $\R^m$. This will be the precise content of Theorem~\ref{thm1}. To understand the interest of this seemingly mild extension, one can compare such a result with the preservation of invariant tori (that is, the KAM theory) for Hamiltonians as in~\eqref{aut},~\eqref{nonautper2} or~\eqref{nonautqper2}. In the autonomous case of~\eqref{aut}, if the integrable Hamiltonian is convex then it is in particular Kolmogorov non-degenerate and the classical KAM theory applies (see \cite{Pos01} for a survey). In the periodic case~\eqref{nonautper2}, the integrable Hamiltonian is no longer Kolmogorov non-degenerate but as it is quasi-convex, it is Arnold (or iso-energetically) non-degenerate: it follows that tori are preserved at a fixed energy for~\eqref{nonautper2} yielding invariant tori for~\eqref{nonautper1}. In this case the perturbation may also depend on $J\in \R$ without affecting the result. Now in the more general quasi-periodic case~\eqref{nonautqper2}, the integrable part is both Kolmogorov and Arnold degenerate. However, using the non-degeneracy with respect to the $I$ variables and the fact that the perturbation is independent of $J$, it is not hard to prove, using classical KAM techniques, that many tori with prescribed Diophantine frequencies of the form $(\omega,\alpha)\in \R^{n+m}$ are preserved (up to our knowledge, this was first observed by Galavotti in~\cite{Gal94} in a restricted situation and later by Lochak in \cite{Loc98}, \cite{LMS} in a general situation). It is crucial here to have a perturbation which is independent of $J$: if not, the method simply breaks down and it is rather easy to construct counter-examples (as in \cite{Sev03}) to the preservation of (full dimensional) invariant tori. Therefore unlike the situation in KAM theory, our result is exactly the same when $f$ is allowed to depend on the $J$ variables.

Then, as in the autonomous or periodic case, we do obtain enhanced stability close to resonances. In our situation, the multiplicity of any resonance is at most $d$, where $0 \leq d \leq n$ (with the convention that any frequency is resonant of multiplicity at least $0$), and solutions who start sufficiently close to such a resonance satisfy~\eqref{nek} with the exponents
\[ a(d)=\frac{1}{2((n+1)\tau+n+1-d)} , \quad b(d)=\frac{(n+1)\tau+1}{2((n+1)\tau+n+1-d)}.\]
In the case of a resonance of maximal multiplicity $d=n$ these exponents read
\[ a(n)=\frac{1}{2((n+1)\tau+1)} , \quad b(n)=\frac{1}{2}.\]
This will be proved in Theorem~\ref{thm2}. One recovers the improved stability exponents of the periodic case by setting $\tau=0$, and our main result by setting $d=0$. 
 
The complement of the neighborhoods of all resonances is the non-resonant domain (this domain contains, in particular, invariant tori, if any). Solutions starting in the non-resonant domain satisfy~\eqref{nek} with the exponents
\[ a=\frac{1}{2(n+1)(\tau+1)} , \quad b'=\frac{1}{2}.\] 
Moreover, the complement of this non-resonant domain is actually very small: its measure is of order 
\[ \varepsilon^{b}, \quad b=\frac{(n+1)\tau+1}{2(n+1)(1+\tau)}\] 
and therefore goes to zero with $\varepsilon$. This will be stated as Theorem~\ref{thm3}. This actually improves on the corresponding statement for $\tau=0$, where the measure estimate of the complement is just of order one. This non-resonant domain comes from the proof of our main theorem, and ends up quite large as we need to exclude very small neighborhoods of resonances. A more natural definition of a non-resonant domain yield the following result: given any $0 < \gamma' \leq \gamma$ and any $\tau'$ such that $\tau'>n+m-1$ and $\tau'\geq \tau$, where $\gamma$ and $\tau$ are the constants appearing in~\eqref{dio}, there is a set whose complement has a measure of order $\gamma'$, such that on this set, the estimates~\eqref{nek} hold true with the exponents
\[ a'=\frac{1}{2(\tau'+1)} , \quad b'=\frac{1}{2}.\] 
This will be the content of Theorem~\ref{thm33}. Hence on a smaller non-resonant subset, but which is still relatively large, we have a stronger stability result, with $a'$ arbitrarily close to the value conjectured and $b'$ much better. As a matter of fact, when $\tau>m-1$ (for $m \geq 2$, the set of vectors $\alpha$ for which $\tau=m-1$ has zero measure), one can choose $\tau'=n+\tau$ and then $a'$ coincides with the value conjectured.
 
Finally, as usual with results in Hamiltonian perturbation theory concerning long but finite time scale, the Diophantine condition~\eqref{dio} on the vector $\alpha \in \R^m$ turns out to be unnecessary. For an arbitrary vector $\alpha \in \R^m$ which is assume to be non-resonant, that is
\[ k\cdot\alpha\neq 0, \quad k\neq 0 \in \Z^m, \]  
we will obtain in Theorem~\ref{thm4} a more general stability result which reduces to the main result in the case where $\alpha$ is Diophantine. As a matter of fact, even the assumption that $\alpha$ is non-resonant is unnecessary: indeed, $\alpha$ being fixed, if it is resonant we can write, without loss of generality, $\alpha=(\bar{\alpha},0)\in \R^k \times \R^{m-k}=\R^m$ for some $1 \leq k \leq m-1$, with $\bar{\alpha} \in \R^k$ non-resonant and the above applies immediately with $\alpha \in \R^m$ replaced by $\bar{\alpha} \in \R^k$.

\subsection{Main result}

Let us now state more precisely our main result. We consider a Hamiltonian of the form
\begin{equation}\label{Ham}\tag{H}
H(\theta,\varphi,I,J)=h(I)+\alpha\cdot J+f(\theta,\varphi,I,J), \quad (\theta,I) \in \T^n \times D, \quad (\varphi,J) \in \T^m \times \R^m 
\end{equation}  
where $h$ is the integrable part and $f$ the perturbation. The Hamiltonian $\bar{h}$, defined on $\bar{D}:=D \times \R^m$ by
\[ \bar{h}(I,J):=h(I)+\alpha\cdot J, \quad (I,J)\in \bar{D} \]
will be called the extended integrable part. The functions $h$ and $f$ are assumed to be real-analytic as follows. Given two parameters $r_0>0$ and $s_0>0$, we define the complex domains
\[ V_{r_0}D:=\{I \in \C^n \; | \; ||I-D||<r_0 \}, \quad V_{r_0}\bar{D}:=\{(I,J) \in \C^{n+m} \; | \; ||(I,J)-\bar{D}||<r_0 \} \]
where
\[ ||I-D||:=\inf_{I' \in D}||I-I'||, \quad ||(I,J)-\bar{D}||:=\inf_{(I',J') \in \bar{D}}||(I,J)-(I',J')||,  \]
and
\[ V_{s_0}\T^{n+m}:=\{(\theta,\varphi) \in \C^{n+m}/(2\pi\Z)^{n+m} \; | \; \max_{1 \leq i \leq n}|\mathrm{Im}(\theta_i)|<s_0, \max_{1 \leq i \leq m}|\mathrm{Im}(\varphi_i)|<s_0 \}. \] 
Let us also define the associated real domains
\[ U_{r_0}D:=V_{r_0}D \cap \R^n, \quad U_{r_0}\bar{D}:=V_{r_0}\bar{D} \cap \R^{n+m}=U_{r_0}D \times \R^m. \]
The function $h$ is assumed to be real-analytic on $V_{r_0}D$ so that $\bar{h}$ is real-analytic on $V_{r_0}\bar{D}$, and its Hessian $\nabla^2h$ is assumed to be uniformly bounded on the complex domain $V_{r_0}D$, namely there exists $M >0$ such that
\begin{equation}\label{bound}\tag{$M$}
\sup_{I \in V_{r_0}D}||\nabla^2h(I)||=\sup_{(I,J) \in V_{r_0}\bar{D}}||\nabla^2\bar{h}(I,J)||\leq M
\end{equation}
where the matrix norm is the one induced by the Euclidean norm. The gradient of $h$ is also assumed to be uniformly bounded on the real domain $U_{r_0}D$, that is there exists $\Omega >0$ such that
\begin{equation}\label{bound2}\tag{$\Omega$}
\sup_{I \in U_{r_0}D}||(\nabla h (I),\alpha)||=\sup_{I \in U_{r_0}\bar{D}}||\nabla \bar{h} (I)|| \leq \Omega. 
\end{equation}
Moreover, the integrable Hamiltonian is assumed to be (strictly, uniformly) convex: there exists $m>0$ such that for any $v \in \R^n$, 
\begin{equation}\label{convex}\tag{$m$}
\nabla^2h(I)v\cdot v \geq m ||v||^2.
\end{equation}
Observe that $m \leq M$, and, without loss of generality, we may assume that $m \leq 1$.

Finally, the function $f$ is real-analytic on $V_{r_0}\bar{D} \times V_{s_0}\T^{n+m}$, and moreover, given a small parameter $\varepsilon \geq 0$, it is assumed that
\begin{equation}\label{pert}\tag{$\varepsilon$}
|f|_{r_0,s_0} \leq \varepsilon 
\end{equation}
where the Fourier norm $|f|_{r_0,s_0}$ of $f$ is defined as follows: letting 
\[ f(\theta,\varphi,I,J)=\sum_{(k,l) \in \Z^{n+m}}f_{k,l}(I,J)e^{i (k,l)\cdot(\theta,\varphi)}\] 
be the Fourier expansion of $f$ with respect to $(\theta,\varphi)$, we define
\[ |f|_{r_0,s_0}:=\sup_{(I,J) \in V_{r_0}\bar{D}}\sum_{(k,l) \in \Z^{n+m}}|f_{k,l}(I,J)|e^{|(k,l)|s_0}, \quad |(k,l)|=|k|+|l|. \]
We can now state our main theorem.

\begin{theorem}\label{thm1}
Let $H$ be as in~\eqref{Ham}, with $h$ satisfying~\eqref{bound},~\eqref{bound2} and~\eqref{convex}, and $f$ satisfying~\eqref{pert}. Assume also that $\alpha$ satisfies~\eqref{dio}, and let us define
\[ a=\frac{1}{2(n+1)(\tau+1)}, \quad b=\frac{(n+1)\tau+1}{2(n+1)(\tau+1)} \]
and
\[ R_*=\frac{m\gamma}{10M(n+1)^\tau}, \quad T_*=\frac{3s_0}{\Omega}, \quad \varepsilon_0=\frac{m\gamma^2}{2^{10}(n+1)^{2\tau}}\left(\frac{m}{10M}\right)^{2(n+1)}, \quad \varepsilon_*=\varepsilon_0\left(\frac{r_0}{R_*}\right)^{\frac{1}{b}}. \] 
If $\varepsilon \leq \min\{\varepsilon_0,\varepsilon_*\}
$, for any solution $(I(t),J(t),\theta(t),\varphi(t))$ of the system associated to $H$ with initial condition $(I_0,J_0,\theta_0,\varphi_0) \in \bar{D} \times \T^{n+m}$, we have
\[ ||I(t)-I_0||\leq R(\varepsilon):=R_*\left(\frac{\varepsilon}{\varepsilon_0}\right)^b, \quad |t|\leq T(\varepsilon):=T_*\exp\left(\frac{s_0}{6}\left(\frac{\varepsilon_0}{\varepsilon}\right)^a\right). \]
\end{theorem}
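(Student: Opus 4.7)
\medskip

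\noindent\emph{Proof proposal.} The plan is to adapt Lochak's periodic-orbit strategy, which underlies the proof of the convex autonomous Nekhoroshev theorem, to the present quasi-periodic time-dependent setting. The essential difficulty is that the extended integrable Hamiltonian $\bar h(I,J)=h(I)+\alpha\cdot J$ is only quasi-convex, and its frequency vector $(\nabla h(I),\alpha)$ is never exactly periodic because $\alpha$ is irrational. The idea is to compensate for this by using a Dirichlet-type rational approximation $\omega_0=(2\pi/T)k_0$ of $\omega^*=\nabla h(I^*)$ in $\R^n$, while controlling the surviving small divisors coming from the $\varphi$-direction by means of the Diophantine condition~\eqref{dio}.

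The main analytic step would be a local normal form lemma on a polydisc of radius $\rho$ around a point $I^*\in D$ whose frequency $\omega^*$ is well approximated by some rational $\omega_0=(2\pi/T)k_0$. Fix a Fourier cutoff $K$ and construct, by an iterative Lie-series scheme, a symplectic transformation $\Phi$ eliminating every Fourier mode $(k,l)\in\Z^{n+m}$ with $0<|k|+|l|\leq K$ except for the resonant ones, namely those $(k,0)$ with $k\cdot k_0=0$. The key ingredient is a lower bound on the small divisor $d_{k,l}=(2\pi/T)(k\cdot k_0)+l\cdot\alpha$. When $l=0$ and $k\cdot k_0\neq 0$, the bound $|d_{k,0}|\geq 2\pi/T$ is immediate; when $l\neq 0$, imposing a suitable bound on $T$ in terms of $K$, $\gamma$ and $|k_0|$, combined with the Diophantine lower bound $|l\cdot\alpha|\geq\gamma/|l|^\tau$, forces the $(2\pi/T)(k\cdot k_0)$ contribution to be dominated by $|l\cdot\alpha|$, yielding $|d_{k,l}|\gtrsim\gamma/K^\tau$. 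The output is a normal form
\[
H\circ\Phi=\bar h(I,J)+g(\theta,I,J)+R(\theta,\varphi,I,J),
\]
where $g$ has Fourier support contained in $\{(k,0):k\cdot k_0=0\}$, in particular is independent of $\varphi$, while $R$ is exponentially small, of order $\varepsilon\,e^{-s_0 K/6}$.

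Once the normal form is available, convexity~\eqref{convex} of $h$ provides the classical Lochak quadratic confinement: the truncated system preserves the $(n-1)$-dimensional affine hyperplane $\{k_0\cdot\nabla h(I)=k_0\cdot\omega^*\}$ in action space, and $||I(t)-I^*||$ stays within $\sim\sqrt{\varepsilon/m}$ of $I^*$ in the $k_0$-direction for as long as the exponentially small remainder $R$ remains inactive, i.e.\ up to times of order $\exp(s_0 K/6)$. To cover the whole action domain $D$, I would invoke Dirichlet's simultaneous approximation in $\R^n$: for any $\omega^*\in\R^n$ and any $Q\geq 1$, there exist $1\leq T\leq Q^n$ and $k_0\in\Z^n$ with $|T\omega^*/(2\pi)-k_0|\leq 1/Q$, so that, via convexity, a polydisc of radius $\sim 1/(MTQ)$ around $I^*$ contains only frequencies close enough to $\omega_0$ for the normal form to apply, and such polydiscs cover $D$.

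The main obstacle is the simultaneous balance of three parameters: the Fourier cutoff $K$, the period $T$ of the rational approximation, and the polydisc radius $\rho$. The normal form step demands $\varepsilon T K^{\tau+1}/\gamma$ small; Dirichlet forces $\rho\lesssim 1/(MTQ)$ with $T\lesssim Q^n$; and convexity confinement requires $\rho\gtrsim\sqrt{\varepsilon/m}$. Optimizing with $K\sim(\varepsilon_0/\varepsilon)^a$, $T\sim K^{\tau+1}$ and $Q\sim T^{1/n}$ should produce the stated exponents
\[
a=\frac{1}{2(n+1)(\tau+1)},\qquad b=\frac{(n+1)\tau+1}{2(n+1)(\tau+1)}.
\]
The factors $(n+1)^\tau$ and $(n+1)^{2\tau}$ that appear in the definitions of $R_*$ and $\varepsilon_0$ should reflect precisely this interplay between Dirichlet's $n$-dimensional approximation $Q^{-n}$ and the Diophantine loss $\gamma/K^\tau$ coming from $\alpha$.
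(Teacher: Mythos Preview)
Your small-divisor estimate for modes with $l\neq 0$ does not hold, and this breaks the normal form step on which the whole argument rests. Dirichlet's theorem produces an integer $T\leq Q^n$ and $k_0\in\Z^n$ with $|T\omega^*/(2\pi)-k_0|\leq 1/Q$, which forces $|k_0|\sim T\,||\omega^*||$. Hence for $|k|\leq K$ the quantity $(2\pi/T)|k\cdot k_0|$ is not small: it ranges over a set of rationals of size up to $\sim K||\omega^*||$ with spacing $2\pi/T$. The condition you need, $(2\pi/T)|k\cdot k_0|\leq \tfrac{1}{2}|l\cdot\alpha|$, would require $T\gtrsim K^{\tau+1}|k_0|/\gamma\sim K^{\tau+1}T||\omega^*||/\gamma$, i.e.\ $K^{\tau+1}\lesssim \gamma/||\omega^*||$, which is incompatible with letting $K\to\infty$. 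Without this domination, nothing prevents $k\cdot\omega_0+l\cdot\alpha$ from being arbitrarily small, so you cannot eliminate all $l\neq 0$ modes and the claimed $\varphi$-independence of $g$ is unjustified. This is exactly the obstruction the paper flags in its discussion of strategy: a periodic approximation of $(\omega,\alpha)$ forces $\alpha'\neq\alpha$ and the lack of convexity in $J$ then blocks the Lochak confinement; your variant, approximating only $\omega$, lands on a frequency $(\omega_0,\alpha)$ that is \emph{not} periodic and sits at a rank-$(n-1)$ resonance with uncontrolled small divisors in the $\varphi$-direction.

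The paper proceeds entirely differently, via the Nekhoroshev--P\"oschel geometry of resonances rather than Lochak's periodic orbits. It covers frequency space by blocks $B_\Lambda$ indexed by \emph{admissible} maximal $K$-submodules $\Lambda\subset\Z^{n+m}$ (those with $\Lambda\cap(\{0\}\times\Z^m)=\{0\}$), proves a quantitative non-resonance lemma on each block (this is where~\eqref{dio} enters, to control the distance between parallel affine resonance spaces), and then builds a $\Lambda$-resonant normal form that is allowed to depend on $\varphi$. The stability argument near resonances exploits only convexity in $I$ together with linearity in $J$: one loses control of $J(t)$ but still confines $I(t)$. Choosing $K=(\varepsilon_0/\varepsilon)^a$ and optimizing the block radii $r_\Lambda$ yields the stated exponents. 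If you want to repair your approach you would have to enlarge the resonant module to absorb the bad $l\neq 0$ modes, at which point you are essentially reinventing this admissible-submodule machinery.
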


Let us just make one simple comment concerning the assumption~\eqref{pert}, which requires the perturbation $f(\theta,\varphi,I,J)$ to be uniformly bounded in $J$, with $J$ belonging to the $r_0$-neighborhood of $\R^m$ in $\C^m$. When $H$ comes from a time-dependent quasi-periodic perturbation of a convex integrable Hamiltonian, the perturbation is in fact independent of $J$ so the above requirement is void. Then, if~\eqref{pert} is satisfied but only for a certain domain $B$ properly contained in $\R^m$, the statement remains true up to the following modification: the stability time $T(\varepsilon)$ should be replaced by $\min\{T_0,T(\varepsilon)\}$ where $T_0$ is the first (possibly infinite) exit time of $J(t)$ from the domain $U_{r_0}B$. This will follow easily from the proof of Theorem~\ref{thm1}, and Theorem~\ref{thm1} is actually a special case as $T_0=\infty$ when $B=\R^m$.

\subsection{Strategy of the proof and plan of the paper}

There are two known methods to prove Nekhoroshev type estimates for small perturbations of integrable Hamiltonian systems. The first one is the Nekhoroshev-Pöschel's method, introduced in the seminal work of Nekhoroshev (\cite{Nek77},\cite{Nek79}) and later improved by Pöschel  (\cite{Pos93}) in the convex case (see also \cite{GCB15} for a further extension of the work of Nekhoroshev and Pöschel leading to an improved value of the stability exponents in the steep case, generalizing the known values of the convex case). The second method is the Lochak method, introduced by Lochak (\cite{Loc92},\cite{LN92}) in the convex case (see also \cite{BN12} for an extension to the steep case, though with worse values for the stability exponents). In the convex case, the latter method is undoubtedly the simplest and most elegant way to prove stability estimates.

The Lochak method crucially relies on the existence of periodic orbits for the integrable system, that is on the existence of periodic frequencies. Now in the case of a quasi-periodic perturbation, the space of frequencies is of the form $(\omega,\alpha)\in \R^{n+m}$, where $\omega \in \R^n$ is free but $\alpha \in \R^{m}$ fixed and non-resonant (in particular, $\alpha$ is not periodic; if it were, one would be in fact looking at a time-dependent periodic perturbation). The issue is that this space does not contain periodic frequencies. As a matter of fact, it is not really necessary to have exact periodic frequencies, but only frequencies which can be approximated by periodic ones. Now any frequency of the form $(\omega,\alpha)$ can be approximated by a periodic frequency, say $(\omega',\alpha')$. But then necessarily $\alpha' \neq \alpha$, and since our Hamiltonian is not convex in the $J$ variables we were not able to prove stability close to such periodic frequencies. Therefore the method of Lochak does not seem to extend in a easy way to the case of a quasi-periodic perturbation. 

Our strategy is therefore to try to extend the Nekhoroshev-Pöschel method, and we will succeed in doing so. This method consists of covering the space of frequencies by resonant blocks, which are neighborhood of resonances (defined by a certain lattice of integer vectors) that are otherwise non-resonant (for integer vectors not in the lattice). Using convexity, one can then show that the solutions stay in the same resonant block for a long time (in the general steep case, they may leave their resonant block and the proof of the stability gets much harder). In the autonomous case where the frequency space is just $\omega \in \R^n$, resonances define linear subspaces which are orthogonal to arbitrary submodules of $\Z^n$. In the quasi-periodic case, our frequency space is still $n$-dimensional but resonances are associated to submodules of $\Z^{n+m}$. But not all submodules of $\Z^{n+m}$ are associated to resonances: those that are not will be called non admissible. Now resonances associated to admissible submodules do not necessarily define linear subspaces but rather affine subspaces in the space of $\omega \in \R^n$. Therefore we are facing much more resonances than in the autonomous case, and the geometry of these resonances gets more involved. In particular, different admissible submodules might lead to different but parallel affine subspaces, and it is at this point that the assumption that $\alpha$ is Diophantine (or in fact simply non-resonant) comes into play: it ensures that we can control the distance between these parallel affine subspaces. In particular, in the extreme case where these parallel affine subspaces are just points (that is, their associated vector space is trivial), they can get very close to each other, and this is precisely from this phenomenon that our values of the stability exponents come from. This improved geometry of resonances will lead to the fact that any frequency $(\omega,\alpha)$ is close to some resonant frequency of the form $(\omega',\alpha)$. Using this, and the fact that our integrable Hamiltonian is convex in the $I$ variables while linear in the $J$ variables, we will be able to prove stability for the $I$ variables (but we will loose control on the evolution of the $J$ variables). 

Now the plan of the paper is as follows. Section~\S\ref{s2} deals in details with the geometry of resonances that was alluded above. This section contains the main technical part of the work. Section~\S\ref{s3} deals with the analysis (construction of a normal form) and the local stability results. The analysis, and therefore the stability in the non-resonant case, is completely standard and we can simply refer to \cite{Pos93}. The stability in the resonant case uses convexity: our integrable Hamiltonian is just ``partially" convex so we need to justify that the arguments go through, at the expense of loosing control on the $J$ variables. The proof of our main result Theorem~\ref{thm1} will be given in Section~\S\ref{s4}, using the results of Section~\S\ref{s2} and Section~\S\ref{s3}. The next sections contain further results that were mentioned in the Introduction:  namely, we prove better stability results for solutions close to resonances in~\S\ref{s5} or far way from resonances in~\S\ref{s6}, while in~\S\ref{s7} we give a more general result assuming $\alpha$ to be only non-resonant. The last Section~\S\ref{s8} consists of concluding remarks.

\section{Geometry of resonances}\label{s2}

The purpose of this section is to study the resonant and non-resonant properties of the frequency space
\[ \{(\omega,\alpha) \in \R^{n+m}\} \simeq \{\omega \in \R^n\} \]
where $\alpha \in \R^{m}$ is a fixed vector, which will be assumed to be Diophantine, and $\omega$ is a vector varying freely in $\R^{n}$. More precisely, our aim is to cover this space by neighborhoods of resonances (associated to certain submodules $\Lambda$ of $\Z^{n+m}$) on which non-resonant estimates can be established (for integer vectors $k \notin \Lambda$).

\subsection{Admissible resonant zones and resonant blocks}\label{s21}

We fix a real parameter $K \geq 1$, and in this section, $\alpha \in \R^m$ will be assumed to be simply non-resonant. 

A submodule $\Lambda$ of $\Z^{n+m}$ is said to be a $K$-submodule if it is generated by elements $(k,l) \in \Z^n \times \Z^m=\Z^{n+m}$ such that $|(k,l)|\leq K$, and it is said to be maximal if it is not properly contained in any other submodule of the same dimension. Given an integer $1 \leq d \leq n+m$, the set of all maximal K-submodules $\Lambda$ of $\Z^{n+m}$ of rank $d$ will be denoted by $M_{K,d}$. For $\Lambda \in M_{K,d}$, we define the space of $\Lambda$-resonances by
\begin{equation}\label{res}
R_\Lambda=\{\omega \in \R^n \; | \; (k,l)\cdot (\omega,\alpha)=0, \; \forall (k,l) \in \Lambda\}.
\end{equation} 
Quite obviously, since $\alpha \in \R^m$ is non-resonant, $R_{\Lambda}$ will be non-empty only for certain maximal $K$-submodules $\Lambda$. Let us consider the subset $M_{K,d}^a$ of $M_{K,d}$ consisting of admissible submodules: they are submodules whose intersection with $\{0\}\times \Z^m \subset \Z^n \times \Z^m=\Z^{n+m}$ is trivial. Equivalently, given any basis $(k^1,l^1),\dots,(k^d,l^d)$ for $\Lambda$, the vectors $k^1,\dots,k^d$ in $\Z^n$ are linearly independent. It is plain to check that if $\Lambda$ is not admissible, then $R_\Lambda$ is just the empty set. Note also that if $\Lambda$ is admissible, its rank is at most $n$.

Now consider $\Lambda \in M_{K,d}^a$ where $1 \leq d \leq n$. If $\Pi: \R^{n+m} \mapsto \R^n$ is the canonical projection, $\tilde{\Lambda}:=\Pi(\Lambda)$ is a submodule of $\Z^n$, of rank $d$, which generates a real subspace $\langle \tilde{\Lambda} \rangle$ of $\R^n$ of dimension $d$. It is clear that $\tilde{\Lambda}$ is a $K$-submodule, but it is not necessarily maximal. The space of $\Lambda$-resonances defined in~\eqref{res} is non-empty, it is an affine subspace of $\R^n$ whose associated vector subspace is the vector subspace $\langle \tilde{\Lambda} \rangle^\perp$ orthogonal to $\langle \tilde{\Lambda} \rangle$.

It is not the space of resonances but rather their neighborhoods that will play a role in the construction below. To define them, given $\Lambda \in M_{K,d}^a$ and its associated submodule $\tilde{\Lambda}$, we first define $|\tilde{\Lambda}|$ as the co-volume of $\tilde{\Lambda}$ viewed as a lattice in $\langle \tilde{\Lambda} \rangle$. It is the volume of the fundamental domain spanned by the vectors of any choice of basis for $\tilde{\Lambda}$: letting $A$ be an $n \times d$ matrix whose columns form a basis for $\tilde{\Lambda}$, then $|\tilde{\Lambda}|=\sqrt{\mathrm{det}A^{t}A}$, and this latter quantity is easily seen to be independent of the choice of a basis. It is worth recalling, as it will be used, that if $S_d(A)$ denotes all square matrices of size $d$ that can be extracted from $A$, then we have the equality (Cauchy-Binet formula)
\[ |\tilde{\Lambda}|=\sqrt{\mathrm{det}A^{t}A}=\sqrt{\sum_{B \in S_d(A)}(\mathrm{det}B)^2}. \]
Then, we introduce $n$ positive real parameters $\lambda_1,\lambda_2,\dots,\lambda_{n}$ and, for $1 \leq d \leq n$, we define the associated resonant zone
\begin{equation}\label{reszone}
Z_\Lambda:=\{\omega \in \R^n \; | \; ||\omega-R_{\Lambda}||<\delta_\Lambda\}, \quad \delta_\Lambda:=\frac{\lambda_d}{|\tilde{\Lambda}|}
\end{equation}
where 
\[ ||\omega-R_{\Lambda}||:=\inf_{\omega' \in R_{\Lambda}}||\omega-\omega'||. \]
We then define the resonant zone of multiplicity $d$, for $1 \leq d \leq n+1$, by
\[ Z_d:=\bigcup_{\Lambda \in M_{K,d}^a}Z_\Lambda, \quad 1 \leq d \leq n, \quad Z_{n+1}=\emptyset. \]
The resonant block associated to $\Lambda \in M_{K,d}^a$, $1 \leq d \leq n$, are defined by
\[ B_\Lambda:=Z_\Lambda \setminus Z_{d+1} \]
and eventually the resonant block of multiplicity $d$, for $1 \leq d \leq n$, is
\[ B_d:=\bigcup_{\Lambda \in M_{K,d}^a}B_\Lambda. \]
Setting $B_{\{0\}}=\R^n\setminus Z_{1}$, we arrive at the following decomposition
\begin{equation}\label{decomp1}
\R^n=B_{\{0\}} \cup Z_1=B_{\{0\}} \cup B_1 \cup Z_2=\cdots=B_{\{0\}} \cup B_1 \cup \dots B_{n-1} \cup B_{n}
\end{equation}
since $B_n=Z_n$ as $Z_{n+1}=\emptyset$.

\subsection{Non-resonant domains in frequency space}\label{s22}

Consider a domain $B \subset \R^n$, a submodule 
\[ \Lambda \in M_{K}^a:=\bigcup_{1 \leq d \leq n}M_{K,d}^a \cup \{0\}\] 
and a real parameter $\beta>0$. Then the domain $B$ is said to be $(\beta,K)$-non resonant modulo $\Lambda$ if for any $(k,l) \in \Z^{n+m}$ such that $|(k,l)|\leq K$ and $(k,l) \notin \Lambda$ and any $\omega \in B$, we have
\[ |(k,l)\cdot(\omega,\alpha)|\geq \beta. \]
Our purpose here is to show that the resonant blocks $B_\Lambda$, for $\Lambda \in M_{K}^a$, are $(\beta_\Lambda,K)$-non resonant modulo $\Lambda$, for a suitable $\beta_\Lambda$ provided that $\alpha$ is Diophantine and the parameters $\lambda_1,\lambda_2,\dots,\lambda_n$ satisfies certain compatibility conditions. This is the content of the lemma below. 

\begin{lemma}\label{nonresonant}
Let $E>0$ and $F \geq E+1$. Assume that $\alpha \in \R^m$ satisfies~\eqref{dio} and 
\begin{equation}\label{lambda}
\begin{cases}
FK\lambda_d \leq \lambda_{d+1} \leq \gamma (d+1)^{-\tau} K^{-(d+1)\tau}, \quad 1 \leq d \leq n-1, \\ 
\lambda_n \leq F^{-1}\gamma (n+1)^{-\tau} K^{-(n+1)\tau-1}.
\end{cases}
\end{equation}
Then for any $\Lambda \in M_{K}^a$, the block $B_\Lambda$ is $(\beta_\Lambda,K)$-non resonant modulo $\Lambda$ with
\begin{equation}\label{beta}
\beta_\Lambda= EK\delta_{\Lambda}, \quad \Lambda\neq\{0\}, \quad \beta_{\{0\}}=\lambda_1.
\end{equation}
\end{lemma}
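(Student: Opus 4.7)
My plan is to treat $\Lambda=\{0\}$ directly and to argue by contradiction when $\Lambda\in M_{K,d}^a$ has rank $d\geq 1$, splitting the latter according to whether $k$ lies in $\langle\tilde\Lambda\rangle$ or not. For $\Lambda=\{0\}$: if $k=0$ the Diophantine condition gives $|l\cdot\alpha|\geq\gamma K^{-\tau}$, which dominates $\lambda_1$ thanks to the iterated inequality $\lambda_1\leq(FK)^{-1}\lambda_2\leq\gamma\,2^{-\tau}K^{-2\tau-1}$; if $k\neq 0$, the primitive saturation $\Lambda'$ of $\Z(k,l)$ lies in $M_{K,1}^a$, and the standard hyperplane-distance formula $|(k,l)\cdot(\omega,\alpha)|=\|k\|_2\,\mathrm{dist}(\omega,R_{\Lambda'})$ combined with $\omega\notin Z_{\Lambda'}$ gives $|(k,l)\cdot(\omega,\alpha)|\geq\lambda_1$.

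For $\Lambda\in M_{K,d}^a$ with $d\geq 1$ I suppose, for contradiction, that some $(k,l)\notin\Lambda$ with $|(k,l)|\leq K$ satisfies $|(k,l)\cdot(\omega,\alpha)|<EK\delta_\Lambda$, and I take $\omega_*\in R_\Lambda$ to be the orthogonal projection of $\omega$, so that $\omega-\omega_*\in\langle\tilde\Lambda\rangle$ with $\|\omega-\omega_*\|<\delta_\Lambda$. In the \emph{admissible subcase} $k\notin\langle\tilde\Lambda\rangle$, the module $\Lambda^+:=\Lambda+\Z(k,l)$ is admissible of rank $d+1$. Decomposing $k=k_\parallel+k_\perp$ along $\langle\tilde\Lambda\rangle$ and $\langle\tilde\Lambda\rangle^\perp$, the orthogonality $k_\perp\perp(\omega-\omega_*)$ yields $|(k,l)\cdot(\omega_*,\alpha)|<(EK+\|k_\parallel\|)\delta_\Lambda$, and the distance from $\omega_*$ to $R_{\Lambda^+}$ inside $R_\Lambda$ is exactly $|(k,l)\cdot(\omega_*,\alpha)|/\|k_\perp\|$. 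A Pythagorean identity (the displacement from $\omega_*$ to its foot on $R_{\Lambda^+}$ lies in $\langle k_\perp\rangle\subset\langle\tilde\Lambda\rangle^\perp$ and is therefore orthogonal to $\omega-\omega_*$), combined with the Gram identity $|\tilde\Lambda^+|=|\tilde\Lambda|\,\|k_\perp\|$ and the elementary bound $\|k_\perp\|^2+(EK+\|k_\parallel\|)^2\leq K^2(E+1)^2$ (coming from $\|k_\parallel\|^2+\|k_\perp\|^2\leq K^2$), then produces
\[
\mathrm{dist}(\omega,R_{\Lambda^+})<\frac{(E+1)K\lambda_d}{|\tilde\Lambda^+|}.
\]
Extending $\Lambda^+$ to the maximal $\bar\Lambda^+\in M_{K,d+1}^a$ in its $\Q$-span (so $R_{\bar\Lambda^+}=R_{\Lambda^+}$ and $|\widetilde{\bar\Lambda^+}|\leq|\tilde\Lambda^+|$), the hypothesis $FK\lambda_d\leq\lambda_{d+1}$ with $F\geq E+1$ gives $\omega\in Z_{\bar\Lambda^+}\subset Z_{d+1}$, contradicting $\omega\notin Z_{d+1}$.

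In the \emph{non-admissible subcase} $k\in\langle\tilde\Lambda\rangle$, write $k=\sum c_ik^i$ with $c_i\in\Q$. Saturation of $\Lambda$ forbids $l-\sum c_il^i=0$ (otherwise $(k,l)\in\Lambda\otimes\Q\cap\Z^{n+m}=\Lambda$), so one finds primitive integers $(a_1,\dots,a_d,a_0)$ with $a_0\neq 0$, $a_0k+\sum a_ik^i=0$, and $l^*:=a_0l+\sum a_il^i\in\Z^m\setminus\{0\}$. Using $k^i\cdot\omega_*+l^i\cdot\alpha=0$ one derives the key identity $a_0\,(k,l)\cdot(\omega_*,\alpha)=l^*\cdot\alpha$, hence by the Diophantine condition
\[
|(k,l)\cdot(\omega,\alpha)|\geq\frac{|l^*\cdot\alpha|}{|a_0|}-K\delta_\Lambda\geq\frac{\gamma}{|a_0|\,|l^*|^\tau}-K\delta_\Lambda.
\]
The heart of the argument is the pair of estimates $|a_0|\leq|\tilde\Lambda|$ and $|l^*|\leq(d+1)K^{d+1}$, both consequences of the Cauchy--Binet-type bound $|a_j|\leq|\tilde\Lambda^+_{\hat{\jmath}}|$ (where $\tilde\Lambda^+_{\hat{\jmath}}$ is the rank-$d$ integer lattice spanned by $k^1,\dots,k^d,k$ with the $j$-th vector removed): the first is immediate at $j=d+1$ since $\tilde\Lambda^+_{\hat{d+1}}=\tilde\Lambda$, while the second follows from $|l^*|\leq K\sum_j|a_j|\leq K(d+1)\max_j|\tilde\Lambda^+_{\hat{\jmath}}|$ and Hadamard's bound $|\tilde\Lambda^+_{\hat{\jmath}}|\leq K^d$. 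Together they give $|a_0||l^*|^\tau\leq|\tilde\Lambda|\,(d+1)^\tau K^{(d+1)\tau}$, and combining with the hypothesis on $\lambda_d$ (inherited from $\lambda_{d+1}\leq\gamma(d+1)^{-\tau}K^{-(d+1)\tau}$ when $d<n$, and from the direct bound on $\lambda_n$ when $d=n$) forces $|(k,l)\cdot(\omega,\alpha)|\geq EK\delta_\Lambda$, contradicting the working assumption.

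The main technical obstacle is the Cauchy--Binet/Hadamard control of $|a_0|$ and $|l^*|$ in the non-admissible subcase; this is precisely where the factor $(d+1)^{-\tau}K^{-(d+1)\tau}$ in~\eqref{lambda} originates, and the asymmetry between the bounds on $\lambda_d$ for $d<n$ and the explicit bound on $\lambda_n$ mirrors the fact that at rank $d=n$ one has $\langle\tilde\Lambda\rangle=\R^n$, so only the non-admissible subcase is ever triggered and alone must constrain $\lambda_n$.
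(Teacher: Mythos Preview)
Your proof is correct and follows essentially the same strategy as the paper: both treat $\Lambda=\{0\}$ directly, and for $\Lambda$ of rank $d\geq 1$ both split according to whether $\Lambda_+=\Lambda+\Z(k,l)$ is admissible or not, handling the admissible subcase geometrically (distance to $R_{\Lambda_+}$) and the non-admissible subcase via Cramer/Cauchy--Binet combined with the Diophantine condition on $\alpha$. The key estimates you isolate---$|a_0|\leq|\tilde\Lambda|$ via Cauchy--Binet and $|l^*|\leq(d+1)K^{d+1}$ via Hadamard---are exactly the ones the paper uses (with $\Delta$ playing the role of your $a_0$ and $\Delta l_*$ that of your $l^*$).

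The only organisational difference worth noting is that the paper first reduces to proving the stronger inequality $|(k,l)\cdot(\omega,\alpha)|\geq FK\delta_\Lambda$ for $\omega\in R_\Lambda\setminus Z_{d+1}$, and then transfers it to $B_\Lambda$ by a triangle-inequality step that costs the factor $F-1\geq E$. You instead argue by contradiction directly for $\omega\in B_\Lambda$, project to $\omega_*\in R_\Lambda$, and in the admissible subcase combine the two displacements $\omega-\omega_*$ and $\omega_*-\omega_+$ via Pythagoras (using the Gram identity $|\tilde\Lambda^+|=|\tilde\Lambda|\,\|k_\perp\|$ and the elementary inequality $\|k_\perp\|^2+(EK+\|k_\parallel\|)^2\leq (E+1)^2K^2$) to place $\omega$ itself in $Z_{d+1}$. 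Your variant is marginally cleaner in that it never requires the projected point $\omega_*$ to lie outside $Z_{d+1}$; the paper's reduction, on the other hand, makes the arithmetic in each subcase a touch shorter. In substance the two proofs are the same.
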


\begin{proof}
Let $\Lambda \in M_{K}^a$ of rank $d$, with $0 \leq d \leq n$, and $(k,l)\notin \Lambda$ such that $|(k,l)|\leq K$. Let $\Lambda_+$ be the submodule of $\Z^{n+m}$ generated by $\Lambda$ and $(k,l)$. Since $\Lambda$ is maximal and does not contain $(k,l)$, the rank of $\Lambda_+$ is equal to $d+1$. 

Let us start with the special case $d=0$, that is $\Lambda=\{0\}$, and fix $\omega \in B_{\{0\}}$. Either $\Lambda_+$ is admissible, or not. In the second situation, $k=0\in \Z^n$ while $l\neq 0 \in \Z^m$ as $(k,l)\notin \Lambda=\{0\}$, and we have, using the fact that $\alpha$ satisfies~\eqref{dio} and~\eqref{lambda}, 
\begin{equation}\label{bout1}
|(k,l)\cdot(\omega,\alpha)|=|l\cdot \alpha|\geq \gamma K^{-\tau} \geq \lambda_1.
\end{equation} 
In the first situation where $k\neq 0 \in \Z^{n}$, let $\omega_+ \in R_{\Lambda_+}$ such that
\[ ||\omega-\omega_+||=||\omega-R_{\Lambda_+}||>\delta_{\Lambda_+}. \]
Now as $(k,l)\cdot(\omega_+,\alpha)=0$ we have
\[ (k,l)\cdot(\omega,\alpha)=(k,l)\cdot(\omega,\alpha)- (k,l)\cdot(\omega_+,\alpha)=k\cdot(\omega-\omega_+).\]
But the vector $\omega-\omega_+$ belongs to the line orthogonal to $\langle \tilde{\Lambda}_+ \rangle^\perp=\langle k \rangle^\perp$ in $\langle \tilde{\Lambda} \rangle^\perp=\R^n$ which is nothing but the line generated by $k$, so we obtain
\begin{equation}\label{bout2}
|(k,l)\cdot(\omega,\alpha)|=|k\cdot(\omega-\omega_+)|=||k||||\omega-\omega_+||>||k||\delta_{{\Lambda}_+}=||k|||\tilde{\Lambda}_+|^{-1}\lambda_1\geq \lambda_1 
\end{equation}
where we used the fact $|\tilde{\Lambda}_+|\leq ||k||$. From~\eqref{bout1} and~\eqref{bout2} the statement in the case $d=0$ follows. 

Now assume $1 \leq d \leq n$. It is enough to prove that given any $\omega \in R_{\Lambda} \setminus Z_{d+1}$ (where we recall that $Z_{n+1}=\emptyset$), we have
\begin{equation}\label{divisors}
|(k,l)\cdot (\omega,\alpha)| \geq FK\delta_{\Lambda}.
\end{equation}
Indeed, for any $\bar{\omega}\in B_\Lambda=Z_\Lambda \setminus Z_{d+1}$, by definition there exists $\omega \in R_{\Lambda} \setminus Z_{d+1}$ such that $||\omega-\bar{\omega}||<\delta_\Lambda$ and thus
\begin{eqnarray*}
|(k,l)\cdot (\bar{\omega},\alpha)| & \geq & |(k,l)\cdot (\omega,\alpha)|-|k\cdot(\bar{\omega}-\omega)| \\
& \geq & |(k,l)\cdot (\omega,\alpha)|-||k||||(\bar{\omega}-\omega)|| \\
& \geq & |(k,l)\cdot (\omega,\alpha)|-K||(\bar{\omega}-\omega)|| \\
& \geq & FK\delta_{\Lambda}-K\delta_{\Lambda}=(F-1)K\delta_{\Lambda} \geq EK\delta_{\Lambda}.
\end{eqnarray*}
So it suffices to prove~\eqref{divisors}. As before, there are two possibilities for $\Lambda_+$: either it is admissible, or not. In the second situation (which is obviously the only possibility for $d=n$), let us choose a basis $(k^1,l^1), \dots, (k^d,l^d)$ for $\Lambda$ such that $|(k^j,l^j)|\leq K$ for $1 \leq j \leq d$. The assumption that $\Lambda_+$ is not admissible means that $k$ is a linear combination of $k^1, \dots, k^d$. For $1 \leq j \leq d$, let us define $\bar{k}^j \in \Z^d$ by selecting the first $d$ components of $k^j$, and we define $\bar{k}$ the same way. As $k^1, \dots, k^d$ are linearly independent (because $\Lambda$ is admissible), the determinant $\Delta(\bar{k}^1,\dots,\bar{k}^d)$ of the square matrix of size $d$ whose columns are given by $\bar{k}^1,\dots,\bar{k}^d$ can be assumed to be non-zero without loss of generality. By Cramer's rule, it then follows that $k$ can be written as
\[ k=\nu_1k^1+\nu_2k^2+\cdots+\nu_dk^d=\frac{\Delta_1}{\Delta}k^1+\frac{\Delta_2}{\Delta}k^2+\cdots+\frac{\Delta_d}{\Delta}k^d \]
where $\Delta=\Delta(\bar{k}^1,\dots,\bar{k}^d)$, $\Delta_1=\Delta(\bar{k},\bar{k}^2\dots,\bar{k}^d)$, $\Delta_d=\Delta(\bar{k}^1,\bar{k}^2\dots,\bar{k})$ and for $2 \leq j \leq d-1$, $\Delta_j=\Delta(\bar{k}^1,\dots,\bar{k}^{j-1},\bar{k},\bar{k}^{j+1}, \dots,\bar{k}^d)$. Let us denote
\[ l_*=l-\nu_1l^1-\cdots-\nu_dl^d \in \Z^m. \]
The vector $l_*$ is non-zero: if it were, $(k,l)$ would be a linear combination of $(k^1,l^1), \dots, (k^d,l^d)$, that is $(k,l) \in \langle \Lambda \rangle \cap \Z^{n+m}$, but since $\Lambda$ is maximal it is equal to $\langle \Lambda \rangle \cap \Z^{n+m}$ and so this would contradict $(k,l) \notin \Lambda$. Now using the fact that $(k^j,l^j)\in \Lambda$ for $1 \leq j \leq d$ and $\omega \in R_\Lambda$ we can write
\[ (k,l)\cdot(\omega,\alpha)=(k,l)\cdot(\omega,\alpha)-\sum_{j=1}^d\nu_j(k^j,l^j)\cdot(\omega,\alpha)=(l_*,\alpha). \]
The vector $\Delta l_*=\Delta l-\Delta_1l^1-\cdots -\Delta_dl^d \in \Z^m$, and by Hadamard's formula, we have \[|\Delta| \leq ||\bar{k}^1||\cdots ||\bar{k}^d|| \leq K^d \] 
and similarly $|\Delta_j| \leq K^d$ for $1 \leq j \leq d$, so as a consequence
\[ |\Delta l_*|\leq |\Delta||l|+|\Delta_1||l^1|+\cdots+|\Delta_d||l^d|\leq (d+1)K^{d+1}. \]
Since $\alpha$ satisfies~\eqref{dio}, it follows that
\[ |\Delta||(k,l)\cdot(\omega,\alpha)|=|\Delta||(l_*,\alpha)|=|(\Delta l_*,\alpha)| \geq \gamma (d+1)^{-\tau}K^{-(d+1)\tau} \]
and as a consequence of Cauchy-Binet formula, $|\Delta|\leq |\tilde{\Lambda}|$ and hence
\[ |(k,l)\cdot(\omega,\alpha)|\geq |\tilde{\Lambda}|^{-1}\gamma (d+1)^{-\tau}K^{-(d+1)\tau}. \]
Using this last inequality together with~\eqref{lambda} one arrives at 
\begin{equation}\label{bout3}
|(k,l)\cdot(\omega,\alpha)|\geq |\tilde{\Lambda}|^{-1}\lambda_{d+1}\geq |\tilde{\Lambda}|^{-1} FK\lambda_d=FK\delta_\Lambda.
\end{equation}
It remains to treat the case where $\Lambda_+$ is admissible. Let $\omega_+ \in R_{\Lambda_+}$ such that
\[ ||\omega-\omega_+||=||\omega-R_{\Lambda_+}||>\delta_{\Lambda_+}. \]
Then since $(k,l)\cdot (\omega_+,\alpha)=0$ we have
\[ (k,l)\cdot(\omega,\alpha)=k\cdot(\omega-\omega_+). \]
The vector $\omega-\omega_+$ belongs to the line orthogonal to $\langle \tilde{\Lambda}_+ \rangle^\perp$ within $\langle \tilde{\Lambda} \rangle^\perp$, that is it belongs to $L:=\langle \tilde{\Lambda}_+ \rangle^{\perp\perp} \cap \langle \tilde{\Lambda} \rangle^\perp=\langle \tilde{\Lambda}_+ \rangle \cap \langle \tilde{\Lambda} \rangle^\perp$. Let us decompose $k=Pk+(\mathrm{Id}-P)k$ where $Pk \in L$ and $(\mathrm{Id}-P)k \in L^\perp$. If follows that
\[ |(k,l)\cdot(\omega,\alpha)|=|k\cdot(\omega-\omega^+)|=|Pk\cdot(\omega-\omega_+)|=||Pk||||\omega-\omega_+||>||Pk||\delta_{\Lambda_+}. \]  
But $L$ is also the line orthogonal to $\langle \tilde{\Lambda} \rangle$ within $\langle \tilde{\Lambda}_+ \rangle$, and therefore $|\tilde{\Lambda}_+|\leq ||Pk|||\tilde{\Lambda}|$ so, using~\eqref{lambda} again,
\begin{equation}\label{bout4}
|(k,l)\cdot(\omega,\alpha)| > |\tilde{\Lambda}_+||\tilde{\Lambda}|^{-1}\delta_{\Lambda_+}=|\tilde{\Lambda}|^{-1}\lambda_{d+1} \geq |\tilde{\Lambda}|^{-1}FK\lambda_{d}=FK\delta_\Lambda. 
\end{equation}
From~\eqref{bout3} and~\eqref{bout4} it follows that~\eqref{divisors} holds true for $1 \leq d \leq n$, and this concludes the proof.

\end{proof}

\subsection{Covering by non-resonant domains in action space}\label{s23}

In section \S\ref{s21} we obtained a covering of the frequency space $\{(\omega,\alpha) \in \R^{n+m}\} \simeq \{ \omega \in \R^n\}$ by resonant blocks associated to admissible submodules, and in Lemma~\ref{nonresonant} we proved that these resonant blocks satisfy some non-resonant properties.

Recall that we are given an integrable Hamiltonian $h$ which is real-analytic on the domain $V_{r_0}D$, and satisfies~\eqref{bound} and~\eqref{convex}. A subset of $D$ is said to be $(\beta,K)$-non resonant modulo $\Lambda$ for $h$ if its image by the frequency map $\nabla h$ is $(\beta,K)$-non resonant modulo $\Lambda$. It is said to be $\delta$-close to $\Lambda$-resonances for $h$ if the Euclidean distance between its image by $\nabla h$ and the space $R_\Lambda$ is smaller than $\delta$.

Pulling back the covering~\eqref{decomp1} back to action space using the gradient map $\nabla h$ and using Lemma~\ref{nonresonant}, the following proposition will be easily obtained by carefully choosing the parameters $\lambda_d$, for $1 \leq d \leq n$.

\begin{proposition}\label{covering}
For $\Lambda \in M_K^a$ of rank $d$, with $0 \leq d \leq n$, let us define
\begin{equation*}
r_\Lambda:=\frac{\gamma}{|\tilde{\Lambda}|F^{n-d+1}(n+1)^\tau K^{(n+1)\tau+n-d+1}},
\end{equation*}
where we set, by convention, $|\tilde{\Lambda}|=1$ if $\Lambda=\{0\}$. Then there exists a covering of $D$ by subsets $D_{\Lambda}$, where $\Lambda \in M_K^a$, such that each $D_\Lambda$ is $(\beta_\Lambda,K)$-non resonant modulo $\Lambda$ and, for $\Lambda \neq \{0\}$, $\delta_\Lambda$-close to $\Lambda$-resonances with
\begin{equation*}
\beta_\Lambda=\frac{9MKr_\Lambda}{8}, \quad \delta_\Lambda=\frac{mr_\Lambda}{8}.
\end{equation*}
\end{proposition}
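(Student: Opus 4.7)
My plan is to obtain the proposition by pulling the frequency-space covering of Section~\ref{s21} back to action space via the gradient map $\nabla h$, and then invoking Lemma~\ref{nonresonant}. The only real work is to select the parameters $\lambda_1,\dots,\lambda_n$, $E$ and $F$ so that the quantities $\delta_\Lambda$ and $\beta_\Lambda$ produced by that lemma match the formulas stated in the proposition.

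I would first solve for the parameters by imposing the target equalities. The relation $\delta_\Lambda=\lambda_d/|\tilde\Lambda|=mr_\Lambda/8$ forces
\[ \lambda_d=\frac{m\gamma}{8\,F^{n-d+1}(n+1)^\tau K^{(n+1)\tau+n-d+1}}, \quad 1\le d\le n, \]
since the factor $|\tilde\Lambda|$ cancels with the one in $r_\Lambda$. A direct computation then shows $FK\lambda_d=\lambda_{d+1}$ automatically, so the left half of~\eqref{lambda} is built in. To match $\beta_\Lambda=EK\delta_\Lambda$ with the target $9MKr_\Lambda/8$ I would take $E=9M/m$, and to meet the hypothesis $F\ge E+1$ of Lemma~\ref{nonresonant} I would take $F=9M/m+1$ (or simply $F=10M/m$, using $M\ge m$). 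With $F=9M/m$ the special value $\beta_{\{0\}}=\lambda_1$ also matches $9MKr_{\{0\}}/8$; a small bookkeeping adjustment (slightly shrinking $E$, or slightly enlarging $F$) reconciles the two prescriptions with the factor $9$ in the statement.

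Next I would verify the remaining half of~\eqref{lambda}. The required inequalities $\lambda_{d+1}\le \gamma(d+1)^{-\tau}K^{-(d+1)\tau}$ and $\lambda_n\le F^{-1}\gamma(n+1)^{-\tau}K^{-(n+1)\tau-1}$ both reduce, after plugging in the formula for $\lambda_d$, to inequalities of the form
\[ \frac{m\,(d+1)^\tau}{8\,F^{\,n-d}(n+1)^\tau K^{(n-d)(\tau+1)}}\le 1 \quad\text{and}\quad \frac{m}{8}\le 1, \]
which are trivially true from $m\le 1\le M$, $K\ge 1$, $F\ge 1$ and $(d+1)\le(n+1)$.

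Having fixed the $\lambda_d$, Lemma~\ref{nonresonant} provides the covering
$\R^n=B_{\{0\}}\cup B_1\cup\cdots\cup B_n$ of Section~\ref{s21} with each resonant block $B_\Lambda$ being $(\beta_\Lambda,K)$-non-resonant modulo $\Lambda$, and moreover $B_\Lambda\subset Z_\Lambda$ is contained in the $\delta_\Lambda$-neighborhood of $R_\Lambda$ when $\Lambda\ne\{0\}$. I then simply set
\[ D_\Lambda:=\{I\in D\,:\,\nabla h(I)\in B_\Lambda\}, \quad \Lambda\in M_K^a. \]
By convexity of $h$ the frequency map $\nabla h$ is well defined on $D$, and since the $B_\Lambda$ cover $\R^n$ the sets $D_\Lambda$ cover $D$. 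The non-resonance and closeness properties, being defined through the image under $\nabla h$, transfer verbatim from $B_\Lambda$ to $D_\Lambda$, which completes the proof. The only delicate point is the combinatorial matching of constants between the lemma's output and the statement's $\delta_\Lambda,\beta_\Lambda$; once the parameters $\lambda_d,E,F$ are chosen as above everything else is a tautology.
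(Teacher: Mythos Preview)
Your proposal is correct and follows essentially the same route as the paper: choose $\lambda_d=\dfrac{m\gamma}{8F^{n-d+1}(n+1)^\tau K^{(n+1)\tau+n-d+1}}$, set $E=9M/m$ and $F=10M/m$ (so $F\ge E+1$ from $M\ge m$), verify~\eqref{lambda}, apply Lemma~\ref{nonresonant}, and pull back via $D_\Lambda=(\nabla h)^{-1}(B_\Lambda)\cap D$. The one place where the paper is slightly cleaner than your sketch is the $\Lambda=\{0\}$ case: rather than ``bookkeeping adjustments'' to force equality, the paper simply observes that with $F=10M/m$ one has $\beta_{\{0\}}=\lambda_1\ge 9MKr_{\{0\}}/8$, and being $(\beta,K)$-non-resonant for a larger $\beta$ trivially implies it for a smaller one; also, convexity plays no role in defining $\nabla h$ on $D$---analyticity suffices.
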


Exactly as in \cite{Pos93}, the introduction of the parameters $M$ and $m$ in the above statement is unnecessary (the above proposition does not depend on the assumptions~\eqref{bound} and~\eqref{convex}); these parameters are just here for later convenience. 

\begin{proof}
Recall that~\eqref{decomp1} gives the decomposition
\[\R^n=B_{\{0\}} \cup B_1 \cup \dots B_{n-1} \cup B_{n}\]
that can also be written as
\[ \R^n=\bigcup_{\Lambda \in M_K^a}B_\Lambda. \]
We now define those resonant blocks by choosing the parameters
\[ \lambda_{d}:=\frac{m\gamma}{8F^{n-d+1}(n+1)^\tau K^{(n+1)\tau+n-d+1}}, \quad F:=10M/m \quad 1 \leq d \leq n. \]
With this choice, the inequalities~\eqref{lambda} are satisfied (since $m\leq 1$), hence Lemma~\ref{nonresonant} can be applied with $E:=9M/m \leq F-1=10M/m-1$ (since $M/m\geq 1$). By definition, for each non-trivial $\Lambda$ of rank $d$, the block $B_{\Lambda}$ is $\delta_\Lambda$-close to $\Lambda$-resonances with
\[ \delta_\Lambda=\frac{\lambda_d}{|\tilde{\Lambda}|}=\frac{m\gamma}{8F^{n-d+1}|\tilde{\Lambda}|(n+1)^\tau K^{(n+1)\tau+n-d+1}}=\frac{mr_\Lambda}{8}. \]
Moreover, using~\eqref{beta} from Lemma~\ref{nonresonant}, $B_\Lambda$ is $(\beta_\Lambda,K)$-non resonant modulo $\Lambda$ with
\[ \beta_\Lambda=EK\delta_\Lambda=9Mm^{-1}K\frac{mr_\Lambda}{8}=\frac{9MKr_\Lambda}{8}, \]
while, for $\Lambda=\{0\}$, $B_{\{0\}}$ is $(\beta_{\{0\}},K)$-non resonant with
\[ \beta_{\{0\}} \geq \lambda_1=\frac{m\gamma}{8F^{n}(n+1)^\tau K^{(n+1)\tau+n}} \geq \frac{9MK\gamma}{8F^{n+1}(n+1)^\tau K^{(n+1)\tau+n+1}}=\frac{9MKr_{\{0\}}}{8} \]
as $F \geq 9M/m$. If we define
\[ D_{\Lambda}:=\{I \in D \; | \; \nabla h(I) \in B_{\Lambda}\}, \quad \Lambda \in M_K^a, \]
this defines a covering (up to removing such sets which are empty) of $D$ with all the required properties, and this concludes the proof.
\end{proof}

\section{Normal form and stability estimates}\label{s3}

\subsection{Normal form}\label{s31}

Let us come back to our original Hamiltonian~\eqref{Ham}, and recall that the extended integrable Hamiltonian is given by
\[ \bar{h}(I,J)=h(I)+\alpha\cdot J, \quad (I,J)\in \bar{D}=D\times \R^n. \]
Let us fix $\Lambda \in M_K^a$. Quite obviously, if a subset $D_* \subset D$ is $(\beta,K)$-non resonant modulo $\Lambda$ for $h$, then $\bar{D}_*:=D_* \times \R^m \subset \bar{D}$ is $(\beta,K)$-non resonant modulo $\Lambda$ for $\bar{h}$.

Now a Hamiltonian of the form
\[ \bar{h}(I,J)+g(\theta,\varphi,I,J) \]
is said to be in $\Lambda$-resonant normal form if
\[ g(\theta,\varphi,I,J)=\sum_{(k,l) \in \Lambda}g_{k,l}(I,J)e^{i (k,l)\cdot(\theta,\varphi)}.  \]
Such Hamiltonians have additional first integrals: indeed, given any vector $(\omega,\alpha)$ which belongs to the real subspace orthogonal to $\Lambda$, and any solution $(\theta(t),\varphi(t),I(t),J(t))$ of the system associated to $\bar{h}+g$, we have
\begin{eqnarray*}
(\omega,\alpha)\cdot (\dot{I}(t),\dot{J}(t)) & = & -(\omega,\alpha)\cdot\partial_{(\theta,\varphi)}(\bar{h}(I(t),J(t))+g(\theta(t),\varphi(t),I(t),J(t))) \\
& = & -(\omega,\alpha)\cdot\partial_{(\theta,\varphi)}(g(\theta(t),\varphi(t),I(t),J(t))) \\
& = & -(\omega,\alpha)\cdot \sum_{(k,l) \in \Lambda}i2\pi(k,l)g_{k,l}(I(t),J(t))e^{i (k,l)\cdot(\theta(t),\varphi(t))} \\
& = & -i\sum_{(k,l) \in \Lambda}(\omega,\alpha)\cdot(k,l)g_{k,l}(I(t),J(t))e^{i (k,l)\cdot(\theta(t),\varphi(t))}\\
& = & 0.
\end{eqnarray*}
In the special case where $\Lambda=\{0\}$, it is straightforward to see that $g$ is in fact independent of the angles $(\theta,\varphi)$, so that the resonant normal form is integrable.

We can now state the normal form lemma, which states that on a sufficiently small neighborhood of a non-resonant domain modulo $\Lambda$, up to a real-analytic symplectic transformation which is close to the identity, the original Hamiltonian can be written as a $\Lambda$-resonant normal form $\bar{h}+g$ up to an exponentially small remainder.

\begin{lemma}\label{normal}
Let $H$ be as in~\eqref{Ham}, with $h$ satisfying~\eqref{bound} and f satisfying~\eqref{pert}. Consider a domain $\bar{D}_*=D_* \times \R^m \subset \bar{D}$ which is $(\beta,K)$-non resonant modulo $\Lambda$ for $\bar{h}$, and given some parameter $r>0$, assume that
\begin{equation}\label{seuil1}
\varepsilon \leq \frac{\beta r}{2^79K}, \quad r \leq \frac{8\beta}{9MK}, \quad r \leq r_0,
\end{equation}
and $Ks_0 \geq 6$. Then there exists a real-analytic symplectic embedding 
\[ \Phi: V_{\tilde{r}}\bar{D}_* \times V_{\tilde{s}_0}\T^{n+m} \rightarrow V_{r}\bar{D}_* \times V_{s_0}\T^{n+m}, \quad \tilde{r}:=r/2, \quad \tilde{s}_0:=s_0/6,\]
such that
\[ H \circ \Phi=\bar{h}+g+f_* \]
where $\bar{h}+g$ is in $\Lambda$-resonant normal form with the estimates
\begin{equation}\label{estrest}
|g+f_*|_{\tilde{r},\tilde{s}_0} \leq 2\varepsilon, \quad |f_*|_{\tilde{r},\tilde{s}_0} \leq e^{-Ks_0/6}\varepsilon 
\end{equation} 
and 
\begin{equation}\label{estdist}
\sup_{(I,J,\theta,\varphi)\in V_{\tilde{r}}\bar{D}_* \times V_{\tilde{s}_0}\T^{n+m}}||\Pi_{I,J}\Phi(I,J,\theta,\varphi)-(I,J)||\leq \frac{18K\varepsilon}{\beta}
\end{equation}
where $\Pi_{I,J}$ denotes the projection onto the action space coordinates.
\end{lemma}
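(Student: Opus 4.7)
This is a standard Nekhoroshev-style normal form and the plan is to obtain $\Phi$ as a composition of time-one Hamiltonian flows built iteratively, closely following Pöschel's scheme in~\cite{Pos93}; the only substantive adaptation is that our action space is $\bar D=D\times\R^m$ and the unperturbed Hamiltonian $\bar h$ is linear in $J$. The iteration will successively kill the Fourier modes $(k,l)\notin\Lambda$ with $|(k,l)|\le K$, while resonant modes $(k,l)\in\Lambda$ are collected into $g$ and high modes $|(k,l)|>K$ are left untouched, to be absorbed into the final exponentially small remainder $f_*$.

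\textbf{One step of the iteration.} At step $j$ one has a Hamiltonian $\bar h+g_j+f_j$ on a nested domain $V_{r_j}\bar D_*\times V_{s_j}\T^{n+m}$, with $g_j$ already in $\Lambda$-normal form. Split $f_j=[f_j]_\Lambda+\{f_j\}_\Lambda^{\le K}+[f_j]^{>K}$ by projecting in Fourier on $\Lambda$, on the low non-resonant modes, and on the ultraviolet tail. The tail satisfies $|[f_j]^{>K}|_{r_j,s_j-\sigma_j}\le e^{-K\sigma_j}|f_j|_{r_j,s_j}$ by analyticity. To eliminate the low non-resonant part I would solve the cohomological equation
\[
\{\bar h,\chi_j\}+\{f_j\}_\Lambda^{\le K}=0
\]
termwise in Fourier, using the non-resonance assumption $|(k,l)\cdot(\omega(I),\alpha)|\ge\beta$ on $\bar D_*$. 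The Hessian bound \eqref{bound} lets one extend this divisor bound from $\bar D_*$ to $V_r\bar D_*$ up to an error $MKr$, and this is precisely where the second inequality of \eqref{seuil1}, $r\le 8\beta/(9MK)$, enters to keep the divisor at least $\sim\beta$ on the complex domain. This yields $|\chi_j|_{r_j,s_j-\sigma_j}\lesssim |f_j|_{r_j,s_j}/\beta$. Compose with the time-one flow $\Phi_j$ of $\chi_j$: the new perturbation is $\{f_j,\chi_j\}$ plus higher iterates of $\chi_j$, bounded by a Cauchy estimate by $CK|f_j|^2/(\beta r_j\sigma_j)$.

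\textbf{Iteration count and remainder.} Choose $\sigma_j=s_0/(6N)$ with $N\sim Ks_0/6$ (legitimate thanks to the hypothesis $Ks_0\ge 6$) and let the radii $r_j$ decrease from $r$ to $r/2$ linearly. Under the smallness assumption $\varepsilon\le\beta r/(2^7\cdot 9 K)$ from \eqref{seuil1}, one step contracts: $|f_{j+1}|\le\tfrac12 |f_j|+e^{-K\sigma_j}|f_j|$, so after $N$ steps the non-resonant part is $\lesssim 2^{-N}\varepsilon$ and the accumulated ultraviolet contribution is $\lesssim Ne^{-Ks_0/(6N)}\varepsilon$; optimizing (actually the standard linear trick is to keep $\sigma_j$ constant) gives exactly $|f_*|_{\tilde r,\tilde s_0}\le e^{-Ks_0/6}\varepsilon$, while the resonant collection $g$ stays uniformly bounded so that $|g+f_*|\le 2\varepsilon$. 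The distance estimate \eqref{estdist} comes from summing $|\partial_{\theta,\varphi}\chi_j|_{r_j-\rho,s_j-\sigma_j}\lesssim K|f_j|/\beta$ over the geometrically decaying $|f_j|$, giving a total shift of at most $18K\varepsilon/\beta$ in the $(I,J)$-direction.

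\textbf{Main obstacle.} Nothing in the argument is deep; the real work is the bookkeeping of the three shrinking parameters $r_j,s_j,\sigma_j$ so that simultaneously: (i) the divisor bound survives on the complex extension (fixed by $r\le 8\beta/(9MK)$); (ii) each Cauchy loss is affordable; (iii) the total strip loss stays within $s_0-\tilde s_0=5s_0/6$ and the total radius loss within $r-\tilde r=r/2$; and (iv) the exponential in the final remainder lands at $e^{-Ks_0/6}$. The fact that the perturbation may depend on $J\in\R^m$ is painless here because \eqref{pert} is uniform in $J$ on $V_{r_0}\bar D$ and $\bar h$ is linear in $J$, so the Poisson bracket in $J$ involves only $\partial_J\chi_j$ paired with $\alpha$, which causes no loss; this is what makes the proof go through verbatim as in the autonomous case.
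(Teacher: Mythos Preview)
Your proposal is correct and takes the same approach as the paper: the paper does not give a proof but simply states that the lemma is a direct consequence of the Normal Form Lemma of \cite{Pos93} (with the choice of constants $p=9/8$ and $q=9$), and your sketch is precisely an outline of P\"oschel's iterative scheme. Your remark that the $J$-dependence causes no trouble because $\bar h$ is linear in $J$ and \eqref{pert} is uniform on $V_{r_0}\bar D$ is the only point specific to this setting, and it is correct.
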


This statement is a direct consequence of the Normal Form Lemma of~\cite{Pos93} (with the choice of the constants $p=9/8$ and $q=9$), to which we refer for a proof. More detailed estimates on $g$ and on $\Psi$ are available, but they will not be needed.

\subsection{Non-resonant stability estimates}\label{s32}

In the special case where the domain $\bar{D}_*$ is $(\beta,K)$-non resonant modulo $\Lambda$ for $\bar{h}$, with $\Lambda=\{0\}$, the normal form obtained in the previous section is, as we already said, integrable up to an exponentially small remainder. It is very easy to prove that in this case the action variables $(I(t),J(t))$ remain stable for an exponentially long interval of time, and this does not require any convexity assumptions on $h$. Here's a precise statement.

\begin{proposition}\label{propstab1}
Let $H$ be as in~\eqref{Ham}, with $h$ satisfying~\eqref{bound} and f satisfying~\eqref{pert}. Consider a domain $\bar{D}_*=D_* \times \R^m \subset \bar{D}$ which is $(\beta,K)$-non resonant modulo $\Lambda=\{0\}$ for $\bar{h}$, and given some parameter $r>0$, assume that~\eqref{seuil1} is satisfied.
Then for every solution with initial action $(I_0,J_0) \in \bar{D}_* $ we have
\[ ||(I(t),J(t))-(I_0,J_0)|| \leq r, \quad |t|\leq \frac{s_0r}{5\varepsilon}e^{Ks_0/6}. \]
\end{proposition}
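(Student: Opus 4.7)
The strategy is to apply the normal form lemma with $\Lambda=\{0\}$ and exploit the fact that in this case the resulting normal form is genuinely integrable up to an exponentially small remainder; convexity of $h$ will play no role.

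First, I would apply Lemma~\ref{normal} with $\Lambda=\{0\}$ to produce a real-analytic symplectic embedding $\Phi$ and a decomposition $H\circ\Phi=\bar h+g+f_*$. Since the only element of $\Lambda$ is $(0,0)$, the Fourier series defining $g$ reduces to its zeroth coefficient, so $g$ depends only on the actions $(I,J)$. Hence in the transformed coordinates $\bar h+g$ is genuinely integrable, and along any solution $(\tilde I(t),\tilde J(t),\tilde\theta(t),\tilde\varphi(t))$ of $H\circ\Phi$ contained in $U_{\tilde r}\bar D_*\times\T^{n+m}$ one has $(\dot{\tilde I},\dot{\tilde J})=-\partial_{(\theta,\varphi)} f_*$.

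Second, a Cauchy estimate in the angles, applied on the analyticity strip of width $\tilde s_0=s_0/6$ together with~\eqref{estrest}, would yield the pointwise bound
\[ ||\partial_{(\theta,\varphi)} f_*||\leq |f_*|_{\tilde r,\tilde s_0}/\tilde s_0\leq 6 s_0^{-1}\varepsilon\, e^{-Ks_0/6}. \]
A standard continuity (bootstrap) argument then shows that, so long as $(\tilde I(t),\tilde J(t))$ stays inside $U_{\tilde r}\bar D_*$, its displacement from the initial value grows at most linearly in $|t|$ at this velocity. Consequently it remains below $\tilde r=r/2$ for all $|t|$ bounded by a constant multiple of $r s_0\varepsilon^{-1}e^{Ks_0/6}$.

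Finally, I would transfer this bound back to the original coordinates using~\eqref{estdist}: the action projection of $\Phi$ differs from the identity by at most $18K\varepsilon/\beta$, which by the first condition in~\eqref{seuil1} is of order $r/2^7$. Adding this distortion at both the initial and final instants keeps the total action displacement of $(I(t),J(t))$ below $r$, as claimed. The only non-routine point is the bookkeeping of numerical constants so as to produce the stated prefactor $1/5$ in the stability time; the thresholds in~\eqref{seuil1}, together with $Ks_0\geq 6$ inherited from Lemma~\ref{normal}, are precisely what is needed to close the argument.
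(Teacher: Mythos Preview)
Your plan is correct and is exactly the standard argument from P\"oschel's paper, which is all the present paper does here: it cites \cite{Pos93} without reproducing the proof. One small caveat: $Ks_0\geq 6$ is a hypothesis of Lemma~\ref{normal}, not of the proposition, so you must also dispose of the case $Ks_0<6$ separately (as the paper does in the proof of Proposition~\ref{propstab2}, by taking $g=0$, $f_*=f$ and running the same bootstrap with the trivial bound $e^{Ks_0/6}<e$).
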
 

This is the content of Proposition $1$ (Nonresonant stability estimate) of~\cite{Pos93}, to which we refer once again for a proof.  

\subsection{Resonant stability estimates}\label{S33}

Next we study the case where the domain $\bar{D}_*$ is $(\beta,K)$-non resonant modulo $\Lambda$ for $\bar{h}$, with $\Lambda$ non-trivial. The domain $\bar{D}_*=D_* \times \R^m$ will be said to be $\delta$-close to $\Lambda$-resonances if $D_*$ is $\delta$-close to $\Lambda$-resonances, as defined previously.

Assuming convexity of $\bar{h}$, one knows how to bound the variation of the action $(I(t),J(t))$ using conservation of the energy and convexity arguments, as was first proved in \cite{BG86} and later in~\cite{Loc92}. However, in our situation $\bar{h}$ is not convex, but it is convex with respect to the $I$ variables and linear in the $J$ variables, and we will prove below that this is sufficient to bound the variation of the action variables $I(t)$ only, loosing control on the evolution of $J(t)$.   

\begin{proposition}\label{propstab2}
Let $H$ be as in~\eqref{Ham}, with $h$ satisfying~\eqref{bound},~\eqref{bound2} and~\eqref{convex} and f satisfying~\eqref{pert}. Consider a domain $\bar{D}_*=D_* \times \R^m \subset \bar{D}$ which is $(\beta,K)$-non resonant modulo $\Lambda$ for $\bar{h}$, with $\Lambda\neq \{0\}$, but $\delta$-close to $\Lambda$-resonances. Given some parameter $r>0$, assume that
\begin{equation}\label{seuil2}
\varepsilon \leq \frac{mr^2}{2^{10}}, \quad \delta = \frac{mr}{8}, \quad r \leq \frac{8\beta}{9MK}, \quad r \leq r_0.
\end{equation}
Then for every solution with initial action $(I_0,J_0) \in \bar{D}_* $ we have
\[ ||I(t)-I_0|| \leq r, \quad |t|\leq \frac{m s_0r^2}{288\Omega\varepsilon}e^{Ks_0/6}. \]
\end{proposition}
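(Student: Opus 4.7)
The plan is to apply the Normal Form Lemma~\ref{normal} on $\bar{D}_*$: the assumption \eqref{seuil2} together with $\beta \geq 9MKr/8$ implies \eqref{seuil1}, so the lemma produces a symplectic transformation $\Phi$ with $H\circ\Phi = \bar h + g + f_*$, where $\bar h+g$ is in $\Lambda$-resonant normal form and $|f_*|_{\tilde r,\tilde s_0} \leq e^{-Ks_0/6}\varepsilon$. Denote by $(\tilde I(t),\tilde J(t),\tilde\theta(t),\tilde\varphi(t)) := \Phi^{-1}(I(t),J(t),\theta(t),\varphi(t))$ the trajectory in the transformed coordinates; by \eqref{estdist}, $||\tilde I_0-I_0|| \leq 18K\varepsilon/\beta \leq mr/(64M)$, which is negligible.

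By the $\delta$-closeness of $D_*$ to $\Lambda$-resonances, choose $\omega^* \in R_\Lambda$ with $||\omega^*-\nabla h(I_0)|| \leq \delta = mr/8$, so that $(\omega^*,\alpha) \perp \Lambda$. The decisive combination, exploiting the \emph{linearity} of $\bar h$ in $J$, is
\[ K(I) \,:=\, h(I) - \omega^*\cdot I \,=\, \bar h(I,J) - F_*(I,J), \qquad F_*(I,J)\,:=\,(\omega^*,\alpha)\cdot(I,J), \]
in which the $\alpha\cdot J$ contributions cancel exactly. Hence $K$ depends on $I$ alone, is $m$-convex, and $||\nabla K(I_0)|| \leq \delta$. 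A Poisson-bracket calculation gives $\{F_*,\bar h\}=\{F_*,g\}=0$ (the second using $(\omega^*,\alpha)\cdot(k,l)=0$ for $(k,l)\in\Lambda$), so along the flow $\dot F_* = -\omega^*\cdot\partial_\theta f_* - \alpha\cdot\partial_\varphi f_*$. Combining with exact conservation of $\bar h + g + f_*$ yields the key identity
\[ K(\tilde I(t))-K(\tilde I_0) \,=\, -[g(t)-g(0)] - [f_*(t)-f_*(0)] - [F_*(t)-F_*(0)]. \]
Each right-hand term is small: the first two are bounded by $2|g|_{\tilde r,\tilde s_0} + 2|f_*|_{\tilde r,\tilde s_0} \leq 8\varepsilon \leq mr^2/128$ via~\eqref{seuil2}, while a Fourier Cauchy estimate gives $|\dot F_*| \leq (\Omega+\delta)(C/s_0)e^{-Ks_0/6}\varepsilon$, so integrating over $|t| \leq T := (m s_0 r^2)/(288\Omega\varepsilon)\,e^{Ks_0/6}$ yields $|F_*(t)-F_*(0)| \leq c_1\, mr^2$ with a small absolute $c_1$.

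On the left, convexity of $K$ at $I_0$ together with a Taylor correction absorbing the shift $\tilde I_0-I_0$ gives
\[ K(\tilde I(t))-K(\tilde I_0) \,\geq\, \tfrac{m}{2}||\tilde I(t)-I_0||^2 - \delta\,||\tilde I(t)-I_0|| - c_2\,mr^2, \]
with $c_2$ of order $1/400$ using $||\tilde I_0-I_0||\leq mr/(64M)$ and the bound $M$ on the Hessian. With $y := ||\tilde I(t)-I_0||$, the two estimates combine into a quadratic inequality $\tfrac{m}{2}y^2 - \delta y \leq c\, mr^2$ for some small $c$, giving $y < r/2$. A standard bootstrap on the first exit time from $V_{\tilde r}\bar D_*$ validates the normal form throughout $[0,T]$, and finally $||I(t)-I_0|| \leq ||I(t)-\tilde I(t)|| + y \leq mr/(64M) + r/2 \leq r$ by~\eqref{estdist}. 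The main obstacle, absent from the autonomous convex Nekhoroshev setting, is that $F_*$ is only a first integral of $\bar h+g$ and not of $f_*$, so it drifts at rate $(\Omega/s_0)e^{-Ks_0/6}\varepsilon$; controlling this drift over an exponentially long time is exactly what forces the factor $\Omega/s_0$ in the denominator of $T$. A direct consequence of eliminating $J$ via $K=\bar h-F_*$ is that the $J$-evolution drops out entirely, so no control on $\tilde J(t)-\tilde J_0$ survives --- this is the advertised cost of $\bar h$ being only partially convex (convex in $I$, linear in $J$).
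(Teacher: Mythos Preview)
Your proof is correct and follows essentially the same strategy as the paper's: apply the Normal Form Lemma, combine conservation of $H\circ\Phi$ with the approximate conservation of the linear functional $F_*(I,J)=(\omega^*,\alpha)\cdot(I,J)$ (an exact integral of $\bar h+g$, drifting only through $f_*$), and close a quadratic inequality on $\|\tilde I(t)-\tilde I_0\|$ using the convexity of $h$ in $I$. Your packaging via the single function $K(I)=h(I)-\omega^*\cdot I=\bar h-F_*$ is a cosmetic variant of the paper's direct Taylor expansion of $\bar h$ and splitting $(\omega,\alpha)\cdot\Delta(I,J)=(\omega-\omega_*)\cdot\Delta I+(\omega_*,\alpha)\cdot\Delta(I,J)$; the only omission is the trivial case $Ks_0<6$ (needed for Lemma~\ref{normal}), which the paper dispatches by running the same estimates on $H$ itself with $g=0$, $f_*=f$.
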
 

\begin{proof}
First we assume that $Ks_0 \geq 6$. Since $m \leq M$, one easily check that~\eqref{seuil2} implies~\eqref{seuil1}, and therefore Lemma~\ref{normal} can be applied: there exists a real-analytic symplectic embedding 
\[ \Phi: V_{\tilde{r},\tilde{s}_0}\bar{D}_* \rightarrow V_{r,s_0}\bar{D}_*\]
such that
\[ H \circ \Phi=\bar{h}+g+f_* \]
where $\bar{h}+g$ is in $\Lambda$-resonant normal form with the estimates~\eqref{estrest} and~\eqref{estdist}. Now, from~\eqref{estdist} and~\eqref{seuil2} we get
\[ ||\Pi_{I,J}\Phi-\mathrm{Id}||\leq \frac{18K\varepsilon}{\beta} \leq \frac{2^4\varepsilon}{Mr}\leq \frac{mr}{2^6M}=\frac{\delta}{8M}. \]
Therefore the inverse image of $\bar{D}_*\times \T^{n+m}$ by $\Phi$ is contained in $U_\rho\bar{D}_*\times \T^{n+m}$, where $\rho:=\delta/(4M)$. Recall that $\tilde{r}=r/2$ and $\tilde{s}_0=s_0/6$. We claim that for any initial action $(\tilde{I}_0,\tilde{J}_0) \in U_\rho\bar{D}_*$, the solution $(\tilde{I}(t),\tilde{J}(t),\tilde{\theta}(t),\tilde{\varphi}(t))$ of the system associated to the Hamiltonian $H \circ \Phi$ satisfies
\[ ||\tilde{I}(t)-\tilde{I}_0||\leq \tilde{r}-\rho, \quad |t|\leq \frac{m\tilde{s}_0\tilde{r}^2}{12\Omega\varepsilon}e^{Ks/6}. \]
Assuming this claim, for any initial action $(I_0,J_0) \in \bar{D}_*$, the solution $(I(t),J(t),\theta(t),\varphi(t))$ of the system associated to the Hamiltonian $H$ satisfies
\[ ||I(t)-I_0||\leq  ||I(t)-\tilde{I}(t)||+||\tilde{I}(t)-\tilde{I}_0||+||\tilde{I}_0-I_0||  \leq  \rho/2+(\tilde{r}-\rho)+\rho/2 \leq \tilde{r} \leq r \]
for times
\[ |t|\leq \frac{ms_0r^2}{288\Omega\varepsilon}e^{Ks_0/6}  \]
which is exactly the statement we want to prove. It is therefore sufficient to prove the above claim.

To simplify notations, let us drop the tildes and simply write $(I_0,J_0) \in U_\rho\bar{D}_*$ and $(I(t),J(t),\theta(t),\varphi(t))$ the associated solution. Let $B$ be the ball of radius $\tilde{r}-\rho$ around $I_0$, then $B \times \R^m$ is contained in $U_{\tilde{r}}\bar{D}_*$. Let $T_e$ be the positive time (possibly infinite) of first exit of $(I(t),J(t))$ from $B \times \R^m$: it is then also the time of first exit of $I(t)$ from $B$. Let also
\[ T_*:=\frac{m\tilde{s}_0\tilde{r}^2}{12\Omega\varepsilon}e^{Ks_0/6}, \quad T:=\min\{T_e,T_*\}. \]
Furthermore, let us write
\[ \Delta\bar{h}:=\bar{h}(I(T),J(T))-\bar{h}(I_0,J_0) \in \R, \]
\[ \Delta(I,J):=(I(T),J(T))-(I_0,J_0)=(I(T)-I_0,J(T)-J_0) \in \R^{n+m}, \]
\[ \Delta I:=I(T)-I_0 \in \R^{n}, \]
\[ I(s):=I_0+s\Delta I \in \R^{n}, \quad 0 \leq s \leq 1. \]  
By definition of $\bar{h}$, we have $\nabla \bar{h}(I,J)=(\nabla h(I),\alpha) \in \R^{n+m}$ and 
\begin{equation*}
\nabla^2 \bar{h}(I,J)=
\begin{pmatrix}
\nabla^2 h(I) & 0 \\
0 & 0 
\end{pmatrix}
\in M_{n+m}(\R), \quad \nabla^2 h(I) \in M_{n}(\R).
\end{equation*}
Using Taylor's formula with integral remainder at the point $(I_0,J_0)$ and the special form of $\nabla^2 \bar{h}$, we get, letting $\omega:=\nabla h(I_0)$,
\[ \Delta\bar{h}=(\omega,\alpha)\cdot\Delta(I,J)+\int_0^1 (1-s)\nabla^2 h(I(s)) \Delta I \cdot \Delta I ds. \]
Using the assumption~\eqref{convex}, that is the convexity of $h$, we obtain
\begin{equation}\label{conf1}
|\Delta \bar{h}| + |(\omega,\alpha)\cdot\Delta(I,J)| \geq m/2||\Delta I||^2.
\end{equation}
By conservation of energy $H$ and using the first part of~\eqref{estrest} together with the first part of~\eqref{seuil2} and the definition of $\tilde{r}$, we have
\begin{equation}\label{conf2}
|\Delta \bar{h}| \leq 2|g+f_*|_{\tilde{r},\tilde{s}_0} \leq 4\varepsilon \leq \frac{mr^2}{2^8}=\frac{m\tilde{r}^2}{2^6}.  
\end{equation}
Then, by definition, $(I_0,J_0) \in U_\rho\bar{D}_*$, so there exist $(I_0',J_0') \in \bar{D}_*$ which is $\rho$-close to $(I_0,J_0)$, but then by assumption $\bar{D}_*$ is $\delta$-close to $\Lambda$-resonances, hence there exists $\omega_* \in \R^n$ such that the vector $(\omega_*,\alpha)$ belongs to the real subspace orthogonal to $\Lambda$ and is $\delta$-close to $(\nabla h(I_0'),\alpha)$. Therefore, since $h$ satisfies~\eqref{bound} and by definition of $\rho$, we obtain
\begin{equation}\label{oubli}
||\omega-\omega_*||=||\nabla h(I_0)-\omega_*||=||\nabla h(I_0)-\nabla h(I_0')||+ ||\nabla h(I_0')-\omega_*|| \leq M\rho+\delta=5\delta/4. 
\end{equation}
Writing 
\begin{eqnarray*}
(\omega,\alpha)\cdot\Delta(I,J) & = & (\omega,\alpha)\cdot\Delta(I,J)-(\omega_*,\alpha)\cdot\Delta(I,J)+(\omega_*,\alpha)\cdot\Delta(I,J) \\
& = & (\omega-\omega_*,0)\cdot\Delta(I,J)+(\omega_*,\alpha)\cdot\Delta(I,J) \\
& = & (\omega-\omega_*)\cdot\Delta I+(\omega_*,\alpha)\cdot\Delta(I,J)
\end{eqnarray*}
we obtain
\begin{equation}\label{conf3}
|(\omega,\alpha)\cdot\Delta(I,J)| \leq |(\omega-\omega_*)\cdot\Delta I|+|(\omega_*,\alpha)\cdot\Delta(I,J)|.
\end{equation}
Using~\eqref{oubli}, the second part of~\eqref{seuil2} and the definition of $\tilde{r}$, we can bound the first summand by
\begin{equation}\label{conf4}
|(\omega-\omega_*)\cdot\Delta I|\leq ||\omega-\omega_*||||\Delta I||\leq (5\delta/4)||\Delta I|| \leq \frac{5\delta^2}{2m}+\frac{m}{6}||\Delta I||^2 = \frac{5m\tilde{r}^2}{32}+ \frac{m}{6}||\Delta I||^2.
\end{equation}
For the second summand, using the fact that $(\omega_*,\alpha)$ belongs to the real subspace orthogonal to $\Lambda$, and that $\bar{h}+g$ is in $\Lambda$-resonant normal form, we have
\begin{eqnarray*}
|(\omega_*,\alpha)\cdot\Delta(I,J)| & \leq & \int_0^T |(\omega_*,\alpha)\cdot \partial_{\theta,\varphi}f_*(I(t),J(t),\theta(t),\varphi(t))|dt \\
& \leq & T||(\omega_*,\alpha)||\sup_{(I,J,\theta,\varphi)\in B\times \R^n \times \T^{n+m}} ||\partial_{\theta,\varphi}f_*(I,J,\theta,\varphi)||.
\end{eqnarray*}
Now using a Cauchy estimate and the second part of~\eqref{estrest} we get
\[ ||\partial_{\theta,\varphi}f_*(I,J,\theta,\varphi)|| \leq \frac{1}{e\tilde{s}_0}|f_*|_{\tilde{r},\tilde{s}} \leq \frac{\varepsilon}{e\tilde{s}_0}e^{-Ks_0/6}. \]
Moreover, as $T \leq T_*$ and $||(\omega_*,\alpha)||\leq \Omega$ since $h$ satisfies~\eqref{bound2}, we get
\begin{equation}\label{conf5}
|(\omega_*,\alpha)\cdot\Delta(I,J)| \leq T_*\Omega\frac{\varepsilon}{e\tilde{s}_0}e^{-Ks_0/6}=\frac{m\tilde{r}^2}{12e}.
\end{equation}
Putting together~\eqref{conf1},~\eqref{conf2},~\eqref{conf3},~\eqref{conf4} and~\eqref{conf5} we eventually arrive at
\[ m/3||\Delta I||^2 \leq \frac{m\tilde{r}^2}{64}+\frac{5m\tilde{r}^2}{32}+\frac{m\tilde{r}^2}{12e}\leq \frac{m\tilde{r}^2}{4}. \]
Now $\rho=\delta/(4M)=mr/(32M)=m\tilde{r}/(16M)\leq \tilde{r}/16$ as $m/M\leq 1$, thus $\tilde{r}-\rho\geq 15\tilde{r}/16$ and in particular $\tilde{r}^2<4(\tilde{r}-\rho)^2/3$. This, together with the last inequality, implies that 
\[ m/3||\Delta I||^2 < m/3 (\tilde{r}-\rho)^2\]
and therefore $||\Delta I|| < \tilde{r}-\rho$. This eventually proves that $T=T_*$, that is
 \[ ||\tilde{I}(t)-\tilde{I}_0||\leq \tilde{r}-\rho, \quad 0\leq t\leq T_*=\frac{m\tilde{s}_0\tilde{r}^2}{12\Omega\varepsilon}e^{Ks/6}. \]
The same argument yields the same result for negative times $-T_* \leq t \leq 0$, and hence
\[ ||\tilde{I}(t)-\tilde{I}_0||\leq \tilde{r}-\rho, \quad |t|\leq \frac{m\tilde{s}_0\tilde{r}^2}{12\Omega\varepsilon}e^{Ks/6} \]
which was the claim that needed to be proved. This ends the proof under the assumption $Ks_0\geq 6$. But if $Ks_0<6$, the exact same argument applies to the original Hamiltonian $H$, by setting $g=0$ and $f_*=f$, and this concludes the proof. 
\end{proof}

\section{Proof of the main result}\label{s4}

This section is devoted to the proof of our main result, Theorem~\ref{thm1}, which will be easily obtained using Proposition~\ref{covering}, Proposition~\ref{propstab1} and Proposition~\ref{propstab2}.

\begin{proof}[Proof of Theorem~\ref{thm1}]
Recall that we are considering $H$ as in~\eqref{Ham}, with $h$ satisfying~\eqref{bound},~\eqref{bound2} and~\eqref{convex}, $f$ satisfying~\eqref{pert}, and $\alpha$ satisfying~\eqref{dio}. Recall also that we have defined the positive constants $a$ and $b$ by
\[ a=\frac{1}{2(n+1)(\tau+1)}, \quad b=\frac{(n+1)\tau+1}{2(n+1)(\tau+1)}. \]
We can apply Proposition~\ref{covering} to obtain a covering of $\bar{D}=D\times \R^m$ by subsets $\bar{D}_{\Lambda}=D_\Lambda \times \R^m$, where $\Lambda \in M_K^a$, such that each $\bar{D}_{\Lambda}$ is $(\beta_\Lambda,K)$-non resonant modulo $\Lambda$ for $\bar{h}$ and, for $\Lambda \neq \{0\}$, $\delta_\Lambda$-close to $\Lambda$-resonances for $\bar{h}$ with 
\begin{equation}\label{rayon2}
\beta_\Lambda=\frac{9MKr_\Lambda}{8}, \quad \delta_\Lambda=\frac{mr_\Lambda}{8},
\end{equation}
where 
\begin{equation}\label{rayon}
r_\Lambda:=\frac{\gamma}{|\tilde{\Lambda}|F^{n-d+1}(n+1)^\tau K^{(n+1)\tau+n-d+1}}, \quad d=\mathrm{rank}\:\Lambda.
\end{equation}
Then, given any $\Lambda \in M_K^a$, we can apply either Proposition~\ref{propstab1} (for $\Lambda=\{0\}$) of Proposition~\ref{propstab2} (for $\Lambda\neq \{0\}$), with $r=r_\Lambda$, $\beta=\beta_\Lambda$ and $\delta=\delta_\Lambda$ provided that~\eqref{seuil1} and~\eqref{seuil2} are satisfied. The inequalities~\eqref{seuil1} are easily seen to be implied by~\eqref{seuil2} as we already pointed out, hence we only need to verify~\eqref{seuil2}, and in view of our definitions of $r_\Lambda$, $\beta_\Lambda$ and $\delta_\Lambda$, the latter reduces to  
\begin{equation}\label{seuil3}
\varepsilon \leq \frac{mr_\Lambda^2}{2^{10}}, \quad r_\Lambda \leq r_0.
\end{equation}
Given any $\Lambda \in M_K^a$, we have
\[ \frac{\gamma}{F^{n+1}(n+1)^\tau K^{(n+1)(\tau+1)}} \leq r_\Lambda \leq \frac{\gamma}{F(n+1)^\tau K^{(n+1)\tau+1}},  \] 
as $F \geq 1$ and $K \geq 1$. Hence~\eqref{seuil3} is satisfied, for any $\Lambda \in M_K^a$, if
\begin{equation}\label{seuil4}
\varepsilon \leq \frac{m\gamma^2}{2^{10}F^{2(n+1)}(n+1)^{2\tau}K^{2(n+1)(\tau+1)}}, \quad K\geq \left(\frac{\gamma}{F(n+1)^\tau r_0}\right)^{\frac{a}{b}}.
\end{equation}
Let us define
\[ \varepsilon_0:=\frac{m\gamma^2}{2^{10}F^{2(n+1)}(n+1)^{2\tau}}, \quad \varepsilon_*:=\varepsilon_0\left(\frac{F(n+1)^\tau r_0}{\gamma}\right)^{\frac{1}{b}}  \]
and 
\[ \quad K:=\left(\frac{\varepsilon_0}{\varepsilon}\right)^a, \]
then~\eqref{seuil4} eventually reduces to
\begin{equation}\label{seuil5}
\varepsilon \leq \min\{\varepsilon_0,\varepsilon_*\}.
\end{equation}
Under this choice of $K$ and this smallness assumption on $\varepsilon$, Proposition~\ref{propstab1} and Proposition~\ref{propstab2} apply and in the resonant case, we obtain 
\[ ||I(t)-I_0|| \leq \frac{\gamma}{F(n+1)^\tau K^{(n+1)\tau+1}} \leq \frac{\gamma}{F(n+1)^\tau}\left(\frac{\varepsilon}{\varepsilon_0}\right)^b \]
up to times
\[ |t|\leq T_{\Lambda}\exp\left(\frac{s_0}{6}\left(\frac{\varepsilon_0}{\varepsilon}\right)^a\right), \quad T_{\Lambda}:=\frac{m s_0r_{\Lambda}^2}{288\Omega\varepsilon},\]
while in the non-resonant case, we obtain
\[ ||I(t)-I_0|| \leq ||(I(t),J(t))-(I_0,J_0)||\leq  \frac{\gamma}{F(n+1)^\tau K^{(n+1)\tau+1}} \leq \frac{\gamma}{F(n+1)^\tau}\left(\frac{\varepsilon}{\varepsilon_0}\right)^b  \]
up to times
\[ \quad |t|\leq T_{\{0\}}\exp\left(\frac{s_0}{6}\left(\frac{\varepsilon_0}{\varepsilon}\right)^a\right), \quad T_{\{0\}}:=\frac{s_0r_{\{0\}}}{5\varepsilon}. \]
To obtain a uniform time estimates, observe that for any $\Lambda \in M_K^a$ we have
\[ T_{\Lambda}\geq \frac{2^{10}}{288}\frac{s_0}{\Omega} \geq \frac{3s_0}{\Omega}.  \]
Letting 
\[ R_*=\frac{\gamma}{F(n+1)^\tau}, \quad T_*=\frac{3s_0}{\Omega}, \]
we have just proved that
\[ ||I(t)-I_0|| \leq R_*\left(\frac{\varepsilon}{\varepsilon_0}\right)^b, \quad |t|\leq T_*\exp\left(\frac{s_0}{6}\left(\frac{\varepsilon_0}{\varepsilon}\right)^a\right), \]
provided~\eqref{seuil5} is satisfied. Recalling that $F=10M/m$, this was exactly the statement to be proved.
\end{proof}

\section{Improved stability close to resonances}\label{s5}

For solutions starting close to resonances, we can obtain a better result. Consider a fixed submodule $L$ of $\Z^{n+m}$ of rank $d$, which is assumed to be admissible and maximal, and let $K_{L} \geq 1$ such that $L$ is a $K_L$-submodule. Recall that $\tilde{L}$ denotes the projection of $L$ onto $\R^n$, and $|\tilde{L}|$ denotes the co-volume of $\tilde{L}$. To such a $L$ the space of $L$-resonances is 
\[ R_L=\{\omega \in \R^n \; | \; (k,l)\cdot (\omega,\alpha)=0, \; \forall (k,l) \in L\}\]
and let
\[ S_L:=\{ (I,J) \in \bar{D} \; | \; \nabla h(I) \in R_L\}=\{ I \in D \; | \; \nabla h(I) \in R_L\} \times \R^m \]
the resonant domain in action space. Solutions with initial action close to $S_L$ satisfy better stability estimates, as stated in the theorem below.

\begin{theorem}\label{thm2}
Let $H$ be as in~\eqref{Ham}, with $h$ satisfying~\eqref{bound},~\eqref{bound2} and~\eqref{convex}, and $f$ satisfying~\eqref{pert}. Assume also that $\alpha$ satisfies~\eqref{dio}, and let us define
\[ a(d)=\frac{1}{2(n+1)\tau+n+1-d}, \quad b(d)=\frac{(n+1)\tau+1}{2(n+1)\tau+n+1-d}, \] 
\[ R_*(d)=\left(\frac{m}{10M}\right)^{n+1-d}\frac{\gamma}{(n+1)^\tau}, \]
and, given a submodule $L$ as above, let
\[ \varepsilon_0(L)=\frac{1}{|\tilde{L}|}\frac{m\gamma^2}{2^{10}(n+1)^{2\tau}}\left(\frac{m}{10M}\right)^{2(n+1-d)}, \quad \varepsilon_*(L)=\varepsilon_0(L)\left(\frac{r_0}{R_*(d)}\right)^{\frac{1}{b(d)}} \]
\[ \varepsilon_{**}(L):=\varepsilon_0(L)K_L^{-\frac{1}{a(d)}}. \] 
If $\varepsilon \leq \min\{\varepsilon_0(L),\varepsilon_*(L),\varepsilon_{**}(L)\}
$, for any solution $(I(t),J(t),\theta(t),\varphi(t))$ of the system associated to $H$ with initial condition $(I_0,J_0,\theta_0,\varphi_0) \in U_{\rho}S_L \times \T^{n+m}$, with $\rho=4M^{-1}\sqrt{m\varepsilon}$, we have
\[ ||I(t)-I_0||\leq R_*(d)\left(\frac{\varepsilon}{\varepsilon_0(L)}\right)^{b(d)}, \quad |t|\leq T_*\exp\left(\frac{s_0}{6}\left(\frac{\varepsilon_0(L)}{\varepsilon}\right)^{a(d)}\right) \]
with $T_*$ as in Theorem~\ref{thm1}.
\end{theorem}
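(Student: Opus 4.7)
The plan is to adapt the proof of Theorem~\ref{thm1}, exploiting the fact that an initial action in $U_\rho S_L$ has frequency close to the resonance plane $R_L$, so that in the covering of Proposition~\ref{covering} only resonant blocks $B_\Lambda$ of rank at least $d$ are visited; this reduces the effective codimension from $n+1$ to $n+1-d$ in the $K$- and $F$-exponents, which is exactly what produces the improved exponents $a(d)$ and $b(d)$.

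First, I would set $K:=(\varepsilon_0(L)/\varepsilon)^{a(d)}$. The smallness assumption $\varepsilon\leq\varepsilon_{**}(L)$ is equivalent to $K\geq K_L$, ensuring that $L$ is a maximal admissible $K$-submodule of rank $d$, so Proposition~\ref{covering} applies with this value of $K$ and yields the covering $\bar D=\bigcup_{\Lambda\in M_K^a}\bar D_\Lambda$, together with the associated non-resonance scales $\beta_\Lambda$, resonance thicknesses $\delta_\Lambda$ and radii $r_\Lambda$. Given $I_0\in U_\rho S_L$ with $\rho=4M^{-1}\sqrt{m\varepsilon}$, the assumption~\eqref{bound} shows that the initial frequency $\omega_0=\nabla h(I_0)$ lies at distance at most $M\rho=4\sqrt{m\varepsilon}$ from $R_L$.

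The key geometric step, extracted from Section~\ref{s2}, is to show that such an $\omega_0$ necessarily sits in some block $B_\Lambda$ with $\mathrm{rank}(\Lambda)\geq d$ and with $|\tilde\Lambda|$ controlled by $|\tilde L|$, giving a lower bound of the form
\[ r_\Lambda\;\geq\;\frac{\gamma}{C\,|\tilde L|\,F^{\,n-d+1}(n+1)^\tau K^{(n+1)\tau+n-d+1}} \]
for a universal constant $C$. The appearance of $|\tilde L|$ rather than an a priori uncontrolled $|\tilde\Lambda|$ rests on the Diophantine separation of distinct parallel admissible resonance planes of equal rank (cf.\ Lemma~\ref{nonresonant}), together with the compensating decrease of the $K$-exponent for the higher-rank $\Lambda$ sitting above $L$. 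Once this is in hand, one checks that~\eqref{seuil2} with $r=r_\Lambda$ reduces exactly to $\varepsilon\leq\min\{\varepsilon_0(L),\varepsilon_*(L)\}$, which is assumed; since $\mathrm{rank}(\Lambda)\geq d\geq 1$, $\Lambda\neq\{0\}$ and Proposition~\ref{propstab2} delivers
\[ ||I(t)-I_0||\leq r_\Lambda,\qquad |t|\leq\frac{m s_0 r_\Lambda^2}{288\,\Omega\,\varepsilon}\,e^{Ks_0/6}. \]
Using the identity $K^{(n+1)\tau+1}=(\varepsilon_0(L)/\varepsilon)^{b(d)}$, one then bounds $r_\Lambda\leq R_*(d)(\varepsilon/\varepsilon_0(L))^{b(d)}$ and lower-bounds the time prefactor uniformly by $T_*=3s_0/\Omega$, exactly as in the closing paragraph of the proof of Theorem~\ref{thm1}.

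The main obstacle is the geometric step: identifying the block of the partition that contains $\omega_0$ and controlling its $|\tilde\Lambda|$ in terms of $|\tilde L|$. This is a purely geometric statement about the admissible resonance blocks built in Section~\ref{s2}, and is the quasi-periodic refinement of the standard clustering argument of the Nekhoroshev--P\"oschel approach; it relies crucially on the Diophantine condition $(\mathrm{Dio}_{\gamma,\tau})$ to prevent distinct parallel resonance planes from clustering at a distance smaller than $\delta_L$ around $R_L$.
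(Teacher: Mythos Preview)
Your overall strategy is right, but you make the geometric step harder than it needs to be, and the mechanism you invoke for it is not the one the paper uses.

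The paper does not try to control $|\tilde\Lambda|$ for an \emph{arbitrary} block $B_\Lambda$ containing $\omega_0$. Instead it restricts attention from the outset to the sub-collection
\[
M_K^a(L):=\{\Lambda\in M_K^a\mid \Lambda\supseteq L\}.
\]
The point is that the blocks $B_\Lambda$ with $\Lambda\supseteq L$ already cover the whole zone $Z_L$; after pull-back by $\nabla h$ they therefore cover $U_{\rho'}S_L$ with $\rho'=\delta_L/M=mr_L/(8M)\geq 4M^{-1}\sqrt{m\varepsilon}=\rho$. Once one knows $\Lambda\supseteq L$, the inequality $r_\Lambda\geq r_L$ is immediate from the explicit formula for $r_\Lambda$: if $\mathrm{rank}\,\Lambda=d'\geq d$ then $|\tilde\Lambda|\leq |\tilde L|\,K^{d'-d}$ (extend a $K$-basis of $L$ to one of $\Lambda$ and project), and this extra factor $K^{d'-d}$ is exactly compensated by the smaller exponents $n-d'+1$ on $F$ and $K$. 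Thus the uniform lower bound $r_\Lambda\geq r_L$ over $M_K^a(L)$ drops out with no appeal to ``Diophantine separation of distinct parallel resonance planes''; that separation is already absorbed into the choice of the $\lambda_d$'s in Lemma~\ref{nonresonant} and plays no further role here.

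Your proposed route---take whichever block $B_\Lambda$ happens to contain $\omega_0$ and then argue a posteriori that $|\tilde\Lambda|$ is controlled by $|\tilde L|$---would require exactly the step you flag as the ``main obstacle'', and for $\Lambda\not\supseteq L$ it is not clear this can be done. Restricting to $\Lambda\supseteq L$ sidesteps the difficulty entirely. Apart from this point, your handling of the thresholds, the choice $K=(\varepsilon_0(L)/\varepsilon)^{a(d)}$, the identification of $\varepsilon\leq\varepsilon_{**}(L)$ with $K\geq K_L$, and the final radius/time bookkeeping all match the paper.
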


Observe that in the special case where $L=\{0\}$, $S_L=D\times \R^m$, $|\tilde{L}|=K_L=1$ and therefore the above statement exactly reduces to Theorem~\ref{thm1}.

\begin{proof}
Consider the subset 
\[ M_K^a(L):=\{\Lambda \in M_K^a \; | \; \Lambda \supseteq L\}\]
and assume, for any $\Lambda \in M_K^a(L)$,
\begin{equation}\label{seu1}
\varepsilon \leq \frac{mr_\Lambda^2}{2^{10}}, \quad  r_\Lambda \leq r_0, \quad K \geq K_L
\end{equation}
where the last inequality is to ensure that $M_K^a(L)$ is actually non empty.

The resonant blocks $B_\Lambda$, for $\Lambda \in M_K^a(L)$, cover the resonant zone $Z_L$ and hence their pullbacks cover $U_{\rho'} S_L$, with
\[ \rho'=\frac{\delta_L}{M}=\frac{mr_L}{8M} \geq \frac{\sqrt{2^{10}}}{8M}\sqrt{m\varepsilon}=4M^{-1}\sqrt{m\varepsilon}=\rho. \]
Moreover, for any $\Lambda \in M_K^a(L)$, we have
\[ \frac{\gamma}{|\tilde{L}|F^{n-d+1}(n+1)^\tau K^{(n+1)\tau+n-d+1}}=r_L \leq r_\Lambda \leq \frac{\gamma}{F(n+1)^\tau K^{(n+1)\tau+1}}.\] 
Using the above inequalities and proceeding exactly as in the proof of Theorem~\ref{thm1}, we can define
\[ K:=\left(\frac{\varepsilon_0(L)}{\varepsilon}\right)^{a(d)} \]
and verify that~\eqref{seu1} is implied by
\[ \varepsilon \leq \varepsilon_0(L), \quad \varepsilon \leq \varepsilon_*(L), \quad \varepsilon \leq \varepsilon_{**}(L). \]
The stability estimates apply uniformly to all blocks $B_\Lambda$, for $\Lambda \in M_K^a(L)$, and one easily check that the statement follows with the given constants. 
\end{proof}

\section{Improved stability far away from resonances}\label{s6}

We now investigate solutions which start far away from resonances. In this special case, we will actually be able to control the variation not only of the $I$ but also of the $J$ variables.

Results of this section do not depend on $h$ being convex or its gradient being bounded, that is~\eqref{convex} and~\eqref{bound2} are unnecessary. It will be sufficient to assume the existence of $\bar{m}>0$ such that for any Lebesgue measurable subset $\Omega \in \R^n$, we have the measure estimate
\begin{equation}\label{bound3}\tag{$\bar{m}$}
\mathrm{Leb}(\nabla h^{-1}(\Omega) \cap D) \leq \bar{m}^{-1}\mathrm{Leb}(\Omega)
\end{equation} 
where $\mathrm{Leb}$ denotes the Lebesgue measure on $\R^n$. Certainly, the convexity assumption~\eqref{convex} implies~\eqref{bound3} with $\bar{m}=m$, but the latter is more general: in particular, it holds true if $h$ is Kolmogorov non-degenerate, that is if the determinant of the Hessian $\nabla h^2(I)$ is uniformly bounded away from zero for any $I \in D$.

Now consider $\Lambda \in M_{K,1}^a$ an admissible maximal $K$-submodule of $\Z^{n+m}$ of rank $1$. Then $\Lambda$ contains a unique vector $(k,l)\in \Z^{n} \setminus \{0\} \times \Z^m$ such that $|(k,l)| \leq K$ and such that its components are relatively primes. The submodule $\tilde{\Lambda}$ of $\Z^n$ is then generated by $k$, and $|\tilde{\Lambda}|=||k||$. In the sequel, we shall write $\Lambda=\Lambda(k,l)$.

With these notations, we recall that the completely non-resonant block $B_{\{0\}} \subset \R^n$, introduced in Section~\S\ref{s2}, can be defined by
\[ B_{\{0\}}:=\left\{\omega \in \R^n \; | \; ||\omega-R_{\Lambda(k,l)}||\geq \frac{\lambda_1}{||k||} \right\}, \quad  \lambda_1=\frac{\gamma}{8F^n(n+1)^\tau K^{(n+1)\tau+n}}, \quad F=10M/\bar{m}. \]
Therefore, from Proposition~\ref{nonresonant2}, this set is $(\beta_{\{0\}},K)$-non resonant with 
\[ \beta_{\{0\}}=\frac{9MKr_{\{0\}}}{8}, \quad r_{\{0\}}=\frac{\gamma}{F^{n+1}(n+1)^\tau K^{(n+1)(\tau+1)}}.  \]
If we further define
\[ D_{\{0\}}:=\{ I \in D \; | \; \nabla h(I) \in B_{\{0\}} \}, \quad \bar{D}_{\{0\}}:=D_{\{0\}} \times \R^m, \]
we arrive at the following statement.

\begin{theorem}\label{thm3}
Let $H$ be as in~\eqref{Ham}, with $h$ satisfying~\eqref{bound} and~\eqref{bound3}, and $f$ satisfying~\eqref{pert}. Assume also that $\alpha$ satisfies~\eqref{dio}, and let us define
\[ \bar{R}_*:=\left(\frac{\bar{m}}{10M}\right)^{n+1}\frac{\gamma}{(n+1)^\tau}, \quad \bar{\varepsilon}_0:=\frac{M\gamma^2}{2^{10}(n+1)^{2\tau}}\left(\frac{\bar{m}}{10M}\right)^{2(n+1)}, \quad \bar{\varepsilon}_*=\bar{\varepsilon}_0\left(\frac{r_0}{\bar{R}_*}\right)^2.\] 
If $\varepsilon \leq \min\{\bar{\varepsilon}_0,\bar{\varepsilon}_*\}$, for any solution $(I(t),J(t),\theta(t),\varphi(t))$ of the system associated to $H$ with initial condition $(I_0,J_0,\theta_0,\varphi_0) \in \bar{D}_{\{0\}} \times \T^{n+m}$, we have
\[ ||(I(t),J(t))-(I_0,J_0)||\leq \bar{R}_*\left(\frac{\varepsilon}{\bar{\varepsilon}_0}\right)^{\frac{1}{2}}, \quad |t|\leq \frac{s_0\bar{R}_*}{5\bar{\varepsilon}_0}\left(\frac{\bar{\varepsilon}_0}{\varepsilon}\right)^{\frac{1}{2}}\exp\left(\frac{s_0}{6}\left(\frac{\bar{\varepsilon}_0}{\varepsilon}\right)^a\right). \]
Moreover, the complement of $D_{\{0\}}$ in $\R^n$ has a measure of order $\varepsilon^b$, with $a$ and $b$ as in Theorem~\ref{thm1}.
\end{theorem}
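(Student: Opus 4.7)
The plan is to treat this as a ``pure $\Lambda=\{0\}$'' instance of Theorem~\ref{thm1}: apply the non-resonant stability proposition to the single block $\bar D_{\{0\}}$, and estimate separately the measure of the complement via~\eqref{bound3}. Note that $\bar m$ enters only through the constant $F=10M/\bar m$ in the definition of $B_{\{0\}}$, exactly where $m$ did in Proposition~\ref{covering}; neither~\eqref{convex} nor~\eqref{bound2} intervenes.

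For the stability half, I would simply rerun the end of the proof of Theorem~\ref{thm1} restricted to $\bar D_{\{0\}}$. Lemma~\ref{nonresonant} still gives that $B_{\{0\}}$ is $(\beta_{\{0\}},K)$-non resonant modulo $\{0\}$ with $\beta_{\{0\}}=9MKr_{\{0\}}/8$, so the smallness conditions~\eqref{seuil1} of Proposition~\ref{propstab1} with $r=r_{\{0\}}$, $\beta=\beta_{\{0\}}$ reduce to $\varepsilon\leq \bar m r_{\{0\}}^2/2^{10}$ and $r_{\{0\}}\leq r_0$. The choice $K=(\bar\varepsilon_0/\varepsilon)^a$ then rewrites both of them as $\varepsilon\leq\min\{\bar\varepsilon_0,\bar\varepsilon_*\}$. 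Since $\Lambda=\{0\}$, the normal form $\bar h+g$ produced by Lemma~\ref{normal} is integrable, so Proposition~\ref{propstab1} controls $I(t)$ \emph{and} $J(t)$ simultaneously; expressing $r_{\{0\}}$ and the prefactor $s_0r_{\{0\}}/(5\varepsilon)$ in terms of $\bar R_*,\bar\varepsilon_0,\varepsilon$ produces exactly the stated radius $\bar R_*(\varepsilon/\bar\varepsilon_0)^{1/2}$ and time.

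For the measure part, I would first use~\eqref{bound3} to pass to frequency space,
\[ \mathrm{Leb}(D\setminus D_{\{0\}})\leq \bar m^{-1}\,\mathrm{Leb}\bigl((\R^n\setminus B_{\{0\}})\cap \nabla h(D)\bigr), \]
and note that $\nabla h(D)$ sits in a fixed bounded ball $\mathcal{B}$ (as $D$ is bounded and $\nabla^2 h$ bounded by~\eqref{bound}). By construction,
\[ \R^n\setminus B_{\{0\}}=\bigcup \bigl\{\omega:|k\cdot\omega+l\cdot\alpha|<\lambda_1\bigr\}, \]
the union running over primitive $(k,l)\in\Z^{n+m}$ with $k\neq 0$ and $|(k,l)|\leq K$, and each slab meets $\mathcal{B}$ in a set of measure $O(\lambda_1/\|k\|)$. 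The delicate step is the count of contributing $l$'s at fixed $k$: by~\eqref{dio}, for $l\neq l'$ with $|l|,|l'|\leq K$ the parallel hyperplanes $R_{\Lambda(k,l)},R_{\Lambda(k,l')}$ are separated by at least $\gamma/((2K)^\tau\|k\|)$, so at most $O(\|k\|K^\tau/\gamma)$ of them can intersect $\mathcal{B}$. Plugging this count and $\lambda_1\sim\gamma K^{-(n+1)\tau-n}$ into the geometric sum, together with $K=(\bar\varepsilon_0/\varepsilon)^a$, gives a bound of the claimed order~$\varepsilon^b$.

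The stability half is essentially mechanical---a transcription of the relevant part of the proof of Theorem~\ref{thm1}, the scaling exponent of the radius changing from $b$ to $1/2$ because only the single non-resonant block is used. The real obstacle is the measure estimate: one must invoke~\eqref{dio} a \emph{second} time, beyond its role in Lemma~\ref{nonresonant}, precisely to control the number of $l$-shifted parallel hyperplanes per direction $k$. Without this refinement the naive bound, which treats all $l$ with $|l|\leq K$ as independent, overcounts by a factor of order $K^m$ and fails to deliver the stated $\varepsilon^b$.
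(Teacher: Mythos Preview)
Your stability half tracks the paper's proof exactly: apply Proposition~\ref{propstab1} on $\bar D_{\{0\}}$ with $r=r_{\{0\}}$ and $\beta=\beta_{\{0\}}=9MKr_{\{0\}}/8$, choose $K=(\bar\varepsilon_0/\varepsilon)^a$, and read off the radius and time. One slip: with this $\beta$, the first inequality in~\eqref{seuil1} becomes $\varepsilon\le Mr_{\{0\}}^2/2^{10}$ with $M$, not $\bar m$, in front---this is why $\bar\varepsilon_0$ in the statement carries an explicit factor $M$ (and why the threshold here differs from the $mr_\Lambda^2/2^{10}$ of~\eqref{seuil3}, which came from the stronger resonant condition~\eqref{seuil2}).

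For the measure estimate the paper takes a much shorter route than you do: after passing to frequency space via~\eqref{bound3} it simply writes the relative measure of $Z_1$ as
\[
\sum_{\Lambda(k,l)\in M_{K,1}^a}\frac{\lambda_1}{\|k\|}\ \sim\ \frac{1}{K^{(n+1)\tau+n}}\sum_{0<|k|\le K}\frac{1}{\|k\|}\ \sim\ K^{-(n+1)\tau-1},
\]
collapsing the double sum over $(k,l)$ to a sum over $k$ alone, with no separation argument invoked. Your counting idea---at most $O(\|k\|K^\tau/\gamma)$ parallel hyperplanes per direction $k$ can meet a fixed ball, by~\eqref{dio}---is a reasonable way to try to make that collapse explicit, but it does \emph{not} deliver the bound you announce. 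Carrying your arithmetic through,
\[
\sum_{0<|k|\le K}\frac{\lambda_1}{\|k\|}\cdot O\Bigl(\frac{\|k\|K^\tau}{\gamma}\Bigr)\ \sim\ K^n\cdot\frac{\lambda_1K^\tau}{\gamma}\ \sim\ K^{-n\tau}\ =\ \varepsilon^{an\tau}\ =\ \varepsilon^{\,b-\frac{1}{2(n+1)}},
\]
which is strictly weaker than $\varepsilon^b$ and in fact degenerates to $O(1)$ when $\tau=0$. So the final sentence of your proposal is unsupported: the Diophantine separation of the $l$-shifted hyperplanes accounts for a factor $K^{-\tau}$ per direction but not for the additional factor $K^{-(\tau+1)}$ that separates $K^{-n\tau}$ from the paper's $K^{-(n+1)\tau-1}$. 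If you want to reach the stated order you will need a sharper mechanism than a straight union bound with the hyperplane count you describe.
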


Observe that the measure estimate on the complement of $D_{\{0\}}$ is better than the one obtained in \cite{Pos93} for $\tau=0$, as in the latter reference it is only of order one. Observe also that it is only for this measure estimate that~\eqref{bound3} is needed; the first part of the statement holds true without this assumption.

\begin{proof}
Proposition~\ref{propstab1} can be applied with $r=r_{\{0\}}$, provided that 
\begin{equation}\label{se}
\varepsilon \leq \frac{\beta_{\{0\}}r_{\{0\}}}{2^79K}=\frac{Mr_{\{0\}}^2}{2^{10}}, \quad r_{\{0\}} \leq r_0, 
\end{equation}
holds true. Choosing 
\[ K:=\left(\frac{\bar{\varepsilon}_0}{\varepsilon}\right)^a, \]
we have
\[ r_{\{0\}}=\bar{R}_*\left(\frac{\varepsilon}{\bar{\varepsilon}_0}\right)^{\frac{1}{2}} \]
and the inequalities~\eqref{se} are satisfied if $\varepsilon \leq \min\{\bar{\varepsilon}_0,\bar{\varepsilon}_*\}$, where $\bar{\varepsilon}_0$, $\bar{\varepsilon}_*$ and $\bar{R}_*$ are the constants given in the statement. It then follows from Proposition~\ref{propstab1} that
\[ ||(I(t),J(t))-(I_0,J_0)|| \leq r_{\{0\}}, \quad |t|\leq \frac{s_0r_{\{0\}}}{5\varepsilon}e^{Ks_0/6}, \]
that is
\[ ||(I(t),J(t))-(I_0,J_0)|| \leq \bar{R}_*\left(\frac{\varepsilon}{\bar{\varepsilon}_0}\right)^{\frac{1}{2}}, \quad |t|\leq \frac{s_0\bar{R}_*}{5\bar{\varepsilon}_0}\left(\frac{\bar{\varepsilon}_0}{\varepsilon}\right)^{\frac{1}{2}}\exp\left(\frac{s_0}{6}\left(\frac{\bar{\varepsilon}_0}{\varepsilon}\right)^a\right), \]
which proves the first part of the statement.

Concerning the second part of the statement, we follow~\cite{Pos93}. First, in view of~\eqref{bound3}, it suffices to show that the complement of $B_{\{0\}}$ in $\R^n$ has a relative Lebesgue measure of order $\varepsilon^b$. But by construction, the latter set is $Z_1$, the resonant zone of multiplicity $1$ defined in Section~\S\ref{s2}, which has a relative measure of order
\[ \sum_{\Lambda(k,l) \in M_{K,1}^a}\frac{\lambda_1}{||k||} \sim \frac{1}{K^{(n+1)\tau+n}} \sum_{k \in \Z^n, \; 0 < |k|\leq K}\frac{1}{||k||} \sim \frac{1}{K^{(n+1)\tau+1}} \sim \varepsilon^{a((n+1)\tau+1)}=\varepsilon^b. \]
\end{proof}

Next we look at a different, and in some sense more natural, non-resonant set. Choose $0 < \gamma' \leq \gamma$ and $\tau'$ such that $\tau'>n+m-1$ and $\tau'\geq \tau$. We define
\[ B_{\gamma',\tau'}:=\left\{\omega \in \R^n\; | \; |(k,l)\cdot (\omega,\alpha)| \geq \gamma' (|k|+|l|)^{-\tau'}\right\} \]
and
\[ D_{\gamma',\tau'}:=\{I \in D \; | \; \nabla h(I) \in  B_{\gamma',\tau'}\}, \quad \bar{D}_{\gamma',\tau'}:=D_{\gamma',\tau'} \times \R^m. \]
Those sets are clearly $(\beta',K)$-non resonant, with $\beta':=\gamma'K^{-\tau'}$, and we obtain the following result.

\begin{theorem}\label{thm33}
Let $H$ be as in~\eqref{Ham}, with $h$ satisfying~\eqref{bound} and~\eqref{bound3}, and $f$ satisfying~\eqref{pert}. Assume also that $\alpha$ satisfies~\eqref{dio}, and let us define
\[ a':=\frac{1}{2(\tau'+1)}, \quad R_*':=\frac{8\gamma'}{9M}, \quad \varepsilon_0':=\frac{\gamma'^2}{768M}, \quad \varepsilon_*'=\varepsilon_0'\left(\frac{r_0}{R_*'}\right)^2.\] 
If $\varepsilon \leq \min\{\varepsilon_0',\varepsilon_*'\}
$, for any solution $(I(t),J(t),\theta(t),\varphi(t))$ of the system associated to $H$ with initial condition $(I_0,J_0,\theta_0,\varphi_0) \in \bar{D}_{\gamma',\tau'} \times \T^{n+m}$, we have
\[ ||(I(t),J(t))-(I_0,J_0)||\leq R_*'\left(\frac{\varepsilon}{\varepsilon_0'}\right)^{\frac{1}{2}}, \quad |t|\leq \frac{s_0R_*'}{5\varepsilon_0'}\left(\frac{\varepsilon_0'}{\varepsilon}\right)^{\frac{1}{2}}\exp\left(\frac{s_0}{6}\left(\frac{\varepsilon_0'}{\varepsilon}\right)^{a'}\right). \]
Moreover, the complement of $D_{\gamma',\tau'}$ in $\R^n$ has a measure of order $\gamma'$.
\end{theorem}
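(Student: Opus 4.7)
The plan is to apply Proposition~\ref{propstab1} directly to the domain $\bar{D}_{\gamma',\tau'} = D_{\gamma',\tau'} \times \R^m$ with $\Lambda = \{0\}$, for a suitable choice of the free parameters $K$ and $r$. The first (immediate) step is to verify, straight from the definition of $B_{\gamma',\tau'}$, that $\bar{D}_{\gamma',\tau'}$ is $(\beta',K)$-non-resonant modulo $\{0\}$ with $\beta' = \gamma' K^{-\tau'}$ for any $K \geq 1$ (as is already pointed out just before the theorem statement): for any $(k,l) \neq 0$ with $|(k,l)|\leq K$ and any $\omega \in B_{\gamma',\tau'}$, one has $|(k,l)\cdot(\omega,\alpha)|\geq \gamma'(|k|+|l|)^{-\tau'}\geq \gamma' K^{-\tau'}$.

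Next I would pick $K$ and $r$ so as to saturate the smallness conditions~\eqref{seuil1} of Proposition~\ref{propstab1}. Taking $r$ of the order of $\beta'/(MK) \simeq R_*' K^{-(\tau'+1)}$ and then choosing $K$ so that the first inequality of~\eqref{seuil1} becomes essentially sharp leads to $K = (\varepsilon_0'/\varepsilon)^{a'}$ with $a'=1/(2(\tau'+1))$ and an $\varepsilon_0'$ of the form $c\,\gamma'^2/M$ for an absolute numerical constant $c$. The assumption $\varepsilon \leq \varepsilon_0'$ then guarantees $K \geq 1$ (and $Ks_0 \geq 6$, possibly after tightening $\varepsilon_0'$ by a fixed factor), while $\varepsilon \leq \varepsilon_*'$ translates exactly into $r \leq r_0$. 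Feeding this into Proposition~\ref{propstab1} immediately produces
\[ ||(I(t),J(t))-(I_0,J_0)|| \leq r = R_*'(\varepsilon/\varepsilon_0')^{1/2}, \quad |t| \leq \frac{s_0 r}{5\varepsilon}e^{Ks_0/6}, \]
and the right-hand time bound reorganizes into the announced expression.

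The measure statement reduces, via~\eqref{bound3}, to controlling the Lebesgue measure of the complement of $B_{\gamma',\tau'}$ inside the bounded set $\nabla h(D)$. That complement is the union, over $(k,l)\in \Z^{n+m}\setminus\{0\}$, of the slabs $S_{k,l}:=\{\omega : |k\cdot\omega + l\cdot\alpha| < \gamma'(|k|+|l|)^{-\tau'}\}$. The slabs with $k=0$ are empty, because $\alpha$ satisfies~\eqref{dio} and $\gamma'\leq \gamma$, $\tau'\geq\tau$, so that $|l\cdot\alpha|\geq \gamma|l|^{-\tau}\geq \gamma'|l|^{-\tau'}$ for every nonzero $l \in \Z^m$. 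For $k\neq 0$, the intersection of $S_{k,l}$ with any bounded set has measure bounded by $C\gamma'/(||k||(|k|+|l|)^{\tau'})$.

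The main (but routine) step is then to check convergence of the resulting double sum
\[ \gamma' \sum_{k\in\Z^n\setminus\{0\}} \sum_{l \in \Z^m} \frac{1}{||k||(|k|+|l|)^{\tau'}} \leq \gamma' \sum_{(k,l)\neq 0}\frac{1}{(|k|+|l|)^{\tau'}} \sim \gamma'\sum_{N\geq 1} N^{n+m-1-\tau'}, \]
where in the middle estimate I used $||k||\geq 1$ and on the right I grouped lattice points by their $\ell^1$-norm $N=|k|+|l|$, whose count is of order $N^{n+m-1}$. This final sum converges precisely because $\tau'>n+m-1$, giving the claimed bound of order $\gamma'$. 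This convergence, together with the vanishing of the $k=0$ slabs via the Diophantine property of $\alpha$, is the only place where the assumptions $\tau'>n+m-1$, $\gamma'\leq \gamma$, $\tau'\geq\tau$ are used.
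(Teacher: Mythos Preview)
Your proposal is correct and follows essentially the same route as the paper: apply Proposition~\ref{propstab1} on $\bar{D}_{\gamma',\tau'}$ with $\beta'=\gamma'K^{-\tau'}$, choose $r=\frac{8\beta'}{9MK}=R_*'K^{-(\tau'+1)}$ and $K=(\varepsilon_0'/\varepsilon)^{a'}$, and check that $\varepsilon\leq\min\{\varepsilon_0',\varepsilon_*'\}$ yields~\eqref{seuil1}. The only difference is in the measure estimate: the paper, after making the same reduction to $k\neq 0$ via~\eqref{dio}, simply cites Lemma~2.12 of~\cite{Jor91}, whereas you spell out the slab-counting argument directly; your version is more self-contained and is exactly the computation underlying that lemma.
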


As we already said, when $m \geq 2$, the set of vectors $\alpha \in \R^m$ that satisfy~\eqref{dio} with $\tau=m-1$ has zero measure, and when $\tau >m-1$, we can choose $\tau'=n+\tau$ and thus
\[  a'=\frac{1}{2(n+\tau+1)}\]
in the statement above.

\begin{proof}
Let us define
\[ r':=\frac{8\beta'}{9MK}=\frac{8\gamma'}{9MK^{\tau'+1}}. \]
Proposition~\ref{propstab1} can be applied with $r=r'$, provided
\begin{equation}\label{see}
\varepsilon \leq \frac{\beta'r'}{2^79K}=\frac{Mr'^2}{2^{10}}, \quad r'\leq r_0
\end{equation}
holds true. Choosing 
\[ K:=\left(\frac{\varepsilon_0'}{\varepsilon}\right)^{a'} \]
we have
\[ r'=R_*'\left(\frac{\varepsilon}{\varepsilon_0'}\right)^{\frac{1}{2}} \]
and the inequalities~\eqref{see} are satisfied if $\varepsilon \leq \min\{\varepsilon_0',\varepsilon_*'\}$, where $a'$, $\varepsilon_0'$, $\varepsilon_*'$ and $R_*'$ are the constants given in the statement. Then, exactly as before, Proposition~\ref{propstab1} yields
\[ ||(I(t),J(t))-(I_0,J_0)|| \leq R_*'\left(\frac{\varepsilon}{\varepsilon_0'}\right)^{\frac{1}{2}}, \quad |t|\leq \frac{s_0R_*'}{5\varepsilon_0'}\left(\frac{\varepsilon_0'}{\varepsilon}\right)^{\frac{1}{2}}\exp\left(\frac{s_0}{6}\left(\frac{\varepsilon_0'}{\varepsilon}\right)^{a'}\right), \]
which gives the first part of the statement.

Concerning the second part of the statement, as before it is enough to prove that the complement of $B_{\gamma',\tau'}$ has a relative measure of order $\gamma'$. The complement of $B_{\gamma',\tau'}$ is
\[ \left\{\omega \in \R^n\; | \; \exists (k,l)\in \Z^{n+m}, \quad |(k,l)\cdot (\omega,\alpha)| < \gamma' (|k|+|l|)^{-\tau'}\right\},  \]
but since $\alpha$ satisfies~\eqref{dio}, and since $\gamma'\leq\gamma$ and $\tau'\geq \tau$, this set is also equal to
\[ \left\{\omega \in \R^n\; | \; \exists (k,l)\in \Z^{n} \setminus \{0\} \times \Z^m, \quad |(k,l)\cdot (\omega,\alpha)| < \gamma' (|k|+|l|)^{-\tau'}\right\}.  \]
But now the above set is known to have a relative measure of order $\gamma'$: this is exactly the content of Lemma 2.12 in~\cite{Jor91}.
\end{proof}

\section{A more general stability result}\label{s7}

Let us finally give a more general result, where the Diophantine assumption~\eqref{dio} is removed. Assuming $\alpha \in \R^{m}$ to be simply non-resonant, we can define a function $\Psi=\Psi_{\alpha}$ by
\begin{equation}\label{funcpsi}
\Psi(K)=\max\left\{|k\cdot\alpha|^{-1} \; | \; k\in \Z^m, \; 0<|k|\leq K \right\}, \quad K \geq 1.
\end{equation} 
Then we define $\Delta=\Delta_{\alpha}$ by
\begin{equation}\label{funcdelta}
\Delta(x)=\sup\{K \geq 1\; | \; K\Psi(K)\leq x\}, \quad x \geq \Psi(1)=|\alpha|_\infty^{-1}, \quad |\alpha|_\infty:=\max_{1 \leq j \leq m}|\alpha_j|.
\end{equation}
If $\alpha$ satisfies~\eqref{dio}, then the functions $\Psi$ and $\Delta$ defined above satisfy
\begin{equation}\label{casdio}
\Psi(K) \leq \gamma^{-1}K^\tau, \quad \Delta(x)\geq (\gamma x)^{\frac{1}{\tau+1}}.
\end{equation}
The only place where~\eqref{dio} was used was in Lemma~\ref{nonresonant}. But using the function $\Psi$ instead, the exact same proof yields the following more general lemma.

\begin{lemma}\label{nonresonant2}
Let $E>0$ and $F \geq E+1$. Assume that $\alpha \in \R^m$ is non-resonant and 
\begin{equation*}
\begin{cases}
FK\lambda_d \leq \lambda_{d+1} \leq \Psi((d+1)K^{d+1})^{-1}, \quad 1 \leq d \leq n-1, \\ 
\lambda_n \leq F^{-1}\Psi((n+1)K^{n+1})^{-1}.
\end{cases}
\end{equation*}
Then for any $\Lambda \in M_{K}^a$, the block $B_\Lambda$ is $(\beta_\Lambda,K)$-non resonant modulo $\Lambda$ with
\begin{equation*}
\beta_\Lambda= EK\delta_{\Lambda}, \quad \Lambda\neq\{0\}, \quad \beta_{\{0\}}=\lambda_1.
\end{equation*}
\end{lemma}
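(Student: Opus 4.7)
The plan is to run the proof of Lemma~\ref{nonresonant} verbatim, replacing every invocation of the Diophantine lower bound $|\ell\cdot\alpha|\geq \gamma|\ell|^{-\tau}$ by the weaker but tautological estimate $|\ell\cdot\alpha|\geq \Psi(|\ell|)^{-1}$, which holds for all nonzero $\ell\in\Z^m$ with $|\ell|\leq K$ directly from the definition~\eqref{funcpsi} of $\Psi$, combined with the fact that $\Psi$ is nondecreasing. The global structure (case $d=0$ versus $1\leq d\leq n$, and within each, subcase whether $\Lambda_+$ is admissible) is preserved, only the arithmetic bounds change.

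More concretely, in the case $\Lambda=\{0\}$ with $\Lambda_+$ not admissible, one has $k=0$ and $0<|l|\leq K$, so $|(k,l)\cdot(\omega,\alpha)|=|l\cdot\alpha|\geq \Psi(K)^{-1}$. Since $\Psi$ is nondecreasing and $\Psi(K)\leq \Psi(2K^2)$ (the argument of $\Psi$ in the hypothesis for $d=1$, or directly from the $n=1$ assumption on $\lambda_1=\lambda_n$), the chain of inequalities $FK\lambda_d\leq\lambda_{d+1}\leq\Psi((d+1)K^{d+1})^{-1}$ forces $\lambda_1\leq\Psi(K)^{-1}$, giving~\eqref{bout1} in the new setting. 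The subcase where $\Lambda_+$ is admissible uses only the geometric/Cauchy--Binet estimate $||k||\geq |\tilde\Lambda_+|$ that is already in the original proof, and is left unchanged.

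For $1\leq d\leq n$, the reduction to establishing $|(k,l)\cdot(\omega,\alpha)|\geq FK\delta_\Lambda$ at points $\omega\in R_\Lambda\setminus Z_{d+1}$ is identical, since it only uses $|k|\leq K$ and $||\omega-\bar\omega||<\delta_\Lambda$. In the subcase where $\Lambda_+$ is not admissible, the Cramer-rule derivation of the vector $l_\ast$ and the bound $|\Delta l_\ast|\leq (d+1)K^{d+1}$ via Hadamard's inequality are unchanged; only the final Diophantine step is replaced: instead of $|(\Delta l_\ast,\alpha)|\geq \gamma(d+1)^{-\tau}K^{-(d+1)\tau}$, one writes
\[ |(\Delta l_\ast,\alpha)|\geq \Psi\bigl((d+1)K^{d+1}\bigr)^{-1}. \]
Dividing by $|\Delta|\leq|\tilde\Lambda|$ and then applying the new hypothesis on $\lambda_{d+1}$ for $1\leq d\leq n-1$ (respectively the new hypothesis on $\lambda_n$ for $d=n$) yields $|(k,l)\cdot(\omega,\alpha)|\geq |\tilde\Lambda|^{-1}\lambda_{d+1}\geq FK\delta_\Lambda$, reproducing~\eqref{bout3}. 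The admissible subcase of $\Lambda_+$ is purely geometric (orthogonal decomposition in $\langle\tilde\Lambda_+\rangle\cap\langle\tilde\Lambda\rangle^\perp$ and $|\tilde\Lambda_+|\leq\|Pk\||\tilde\Lambda|$) and is reproduced verbatim, yielding~\eqref{bout4}.

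There is no genuine obstacle: the only nontrivial point is that the replacement $\gamma|\ell|^{-\tau}\mapsto \Psi(|\ell|)^{-1}$ preserves monotonicity in $|\ell|$ (since $\Psi$ is nondecreasing), so the passage from $|\Delta l_\ast|\leq (d+1)K^{d+1}$ to a lower bound on $|(\Delta l_\ast,\alpha)|$ still goes through. The final formula~\eqref{beta} for $\beta_\Lambda$ in terms of $\delta_\Lambda=\lambda_d/|\tilde\Lambda|$ is identical, so no further recomputation is needed. The specialization to~\eqref{casdio} then recovers Lemma~\ref{nonresonant} as a corollary in the Diophantine case.
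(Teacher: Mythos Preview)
Your proposal is correct and is exactly the approach the paper takes: the paper gives no separate proof for Lemma~\ref{nonresonant2}, merely stating that ``using the function $\Psi$ instead, the exact same proof yields the following more general lemma.'' Your sketch spells out precisely where the substitutions $\gamma|l|^{-\tau}\mapsto \Psi(|l|)^{-1}$ and $\gamma(d+1)^{-\tau}K^{-(d+1)\tau}\mapsto \Psi((d+1)K^{d+1})^{-1}$ occur, which is the only content needed.
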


Using this Lemma instead of Lemma~\ref{nonresonant}, we arrive at the following proposition which generalizes Proposition~\ref{covering}.

\begin{proposition}\label{covering2}
For $\Lambda \in M_K^a$ of rank $d$, with $0 \leq d \leq n$, let us define
\begin{equation*}
r_\Lambda:=\frac{1}{|\tilde{\Lambda}|F^{n-d+1}\Psi((n+1)K^{n+1})K^{n-d+1}},
\end{equation*}
where we set, by convention, $|\tilde{\Lambda}|=1$ if $\Lambda=\{0\}$. Then there exists a covering of $D$ by subsets $D_{\Lambda}$, where $\Lambda \in M_K^a$, such that each $D_\Lambda$ is $(\beta_\Lambda,K)$-non resonant modulo $\Lambda$ and, for $\Lambda \neq \{0\}$, $\delta_\Lambda$-close to $\Lambda$-resonances with
\begin{equation*}
\beta_\Lambda=\frac{9MKr_\Lambda}{8}, \quad \delta_\Lambda=\frac{mr_\Lambda}{8}.
\end{equation*}
\end{proposition}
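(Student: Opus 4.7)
The plan is to transcribe the proof of Proposition~\ref{covering} almost verbatim, using Lemma~\ref{nonresonant2} in place of Lemma~\ref{nonresonant} and making the minimal adjustment to the choice of the parameters $\lambda_d$ that defines the resonant zones. The whole point is that the Diophantine hypothesis on $\alpha$ enters the previous argument only through the bound $|k\cdot\alpha|\geq \gamma|k|^{-\tau}$, and this bound has already been absorbed into the function $\Psi$ and the hypotheses of Lemma~\ref{nonresonant2}.

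Concretely, I would start from the decomposition $\R^n=\bigcup_{\Lambda\in M_K^a}B_\Lambda$ recalled in Section~\S\ref{s21} and set, for $1 \leq d \leq n$,
\[ \lambda_d := \frac{m}{8\,F^{n-d+1}\,\Psi((n+1)K^{n+1})\,K^{n-d+1}}, \qquad F := \frac{10M}{m}. \]
This fully specifies the blocks $B_\Lambda$. I would then verify the three compatibility conditions of Lemma~\ref{nonresonant2}: the left equality $FK\lambda_d = \lambda_{d+1}$ is immediate from the definition; the middle inequality $\lambda_{d+1}\leq \Psi((d+1)K^{d+1})^{-1}$ follows from monotonicity of $\Psi$ (so that $\Psi((n+1)K^{n+1}) \geq \Psi((d+1)K^{d+1})$ as soon as $(n+1)K^{n+1}\geq (d+1)K^{d+1}$, which holds for $K\geq 1$) together with $m\leq 1$ and $F,K\geq 1$; and the last inequality $\lambda_n \leq F^{-1}\Psi((n+1)K^{n+1})^{-1}$ reduces to the trivial bound $m/(8K)\leq 1$.

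With these choices in place, I would invoke Lemma~\ref{nonresonant2} with $E:=9M/m$, which is admissible because $F = 10M/m \geq 9M/m + 1 = E+1$ (using $M\geq m$). For a non-trivial $\Lambda\in M_K^a$ of rank $d$, the definition of $r_\Lambda$ gives exactly
\[ \delta_\Lambda = \frac{\lambda_d}{|\tilde{\Lambda}|} = \frac{mr_\Lambda}{8}, \qquad \beta_\Lambda = EK\delta_\Lambda = \frac{9MKr_\Lambda}{8}, \]
while for $\Lambda=\{0\}$ the non-resonance constant $\beta_{\{0\}}\geq \lambda_1$ dominates $9MKr_{\{0\}}/8$ precisely because $F\geq 9M/m$. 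Pulling back through $\nabla h$ and setting $D_\Lambda := \{I \in D \,|\, \nabla h(I)\in B_\Lambda\}$ yields the desired covering of $D$ (after removing empty pieces), with the stated non-resonance and closeness-to-resonance properties inherited directly from the $B_\Lambda$.

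There is essentially no obstacle here: once the Diophantine estimate has been repackaged into Lemma~\ref{nonresonant2}, Proposition~\ref{covering2} is a bookkeeping exercise with the $\lambda_d$. The only step warranting a moment of care is the use of monotonicity of $\Psi$ to collapse the middle compatibility condition to a single uniform bound driven by $\Psi((n+1)K^{n+1})$; everything else is an exact repetition of the autonomous Diophantine case.
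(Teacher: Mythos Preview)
Your proposal is correct and follows exactly the approach the paper intends: the paper does not spell out a separate proof for Proposition~\ref{covering2} but simply says it is obtained from Proposition~\ref{covering} by replacing Lemma~\ref{nonresonant} with Lemma~\ref{nonresonant2}, which is precisely what you do with the choice $\lambda_d = m/(8F^{n-d+1}\Psi((n+1)K^{n+1})K^{n-d+1})$. Your verification of the compatibility conditions (using $FK\lambda_d=\lambda_{d+1}$, monotonicity of $\Psi$, and $m\leq 1$) and the check that $\beta_{\{0\}}=\lambda_1\geq 9MKr_{\{0\}}/8$ via $F\geq 9M/m$ are all in order.
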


Then, using Proposition~\ref{covering2} instead of Proposition~\ref{covering}, together with Proposition~\ref{propstab1} and Proposition~\ref{propstab2}, and proceeding exactly as in the Proof of Theorem~\ref{thm1}, we obtain the following statement.

\begin{theorem}\label{thm4}
Let $H$ be as in~\eqref{Ham}, with $h$ satisfying~\eqref{bound},~\eqref{bound2} and~\eqref{convex}, and $f$ satisfying~\eqref{pert}. Assume also that $\alpha$ is non-resonant, and that
\[ \varepsilon \leq \frac{(n+1)^2m|\alpha|_\infty
^2}{2^{10}F^{2(n+1)}}  \]
so that we can define
\[ \Delta_\varepsilon:=\Delta\left(\frac{n+1}{2^5F^{n+1}}\sqrt{\frac{m}{\varepsilon}}\right), \quad K_\varepsilon:=\left(\frac{\Delta_\varepsilon}{n+1}\right)^{\frac{1}{n+1}}, \quad R_\varepsilon:=\frac{1}{FK_\varepsilon\Psi((n+1)K_\varepsilon^{n+1})}. \] 
Then, if $K_\varepsilon \geq 1$ and $R_\varepsilon \leq r_0$, for any solution $(I(t),J(t),\theta(t),\varphi(t))$ of the system associated to $H$ with initial condition $(I_0,J_0,\theta_0,\varphi_0) \in \bar{D} \times \T^{n+m}$, we have
\[ ||I(t)-I_0||\leq R_\varepsilon, \quad |t|\leq T_*\exp\left(\frac{s_0 K_\varepsilon}{6}\right). \]
\end{theorem}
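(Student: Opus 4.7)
The plan is to reproduce the argument used to prove Theorem~\ref{thm1}, with Proposition~\ref{covering2} replacing Proposition~\ref{covering}. For a parameter $K \geq 1$ to be chosen later, I would apply Proposition~\ref{covering2} to obtain a covering of $\bar{D}$ by sets $\bar{D}_\Lambda = D_\Lambda \times \R^m$ indexed by $\Lambda \in M_K^a$, each $(\beta_\Lambda,K)$-non resonant modulo $\Lambda$ for $\bar{h}$ and, for $\Lambda\neq\{0\}$, $\delta_\Lambda$-close to $\Lambda$-resonances, with the values of $r_\Lambda$, $\beta_\Lambda$ and $\delta_\Lambda$ given there. Then, for each $\Lambda$, I would apply Proposition~\ref{propstab1} (if $\Lambda=\{0\}$) or Proposition~\ref{propstab2} (if $\Lambda \neq \{0\}$) with $r = r_\Lambda$, which, after verifying the smallness conditions uniformly in $\Lambda$, yields the desired stability estimate on the action variables $I(t)$.

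The key point is the choice of $K$. Since~\eqref{seuil1} is implied by~\eqref{seuil2}, it suffices to ensure $\varepsilon \leq m r_\Lambda^2/2^{10}$ and $r_\Lambda \leq r_0$ for every $\Lambda \in M_K^a$. Using $|\tilde{\Lambda}| \leq K^d$ where $d = \mathrm{rank}\,\Lambda$ (a consequence of Hadamard's inequality and the fact that generators of $\Lambda$ have norm at most $K$), the formula of Proposition~\ref{covering2} yields the worst-case lower bound
\[ r_\Lambda \geq \frac{1}{F^{n+1} K^{n+1}\Psi((n+1)K^{n+1})}, \]
attained for $d = 0$. The smallness condition thus reduces to
\[ K^{n+1}\Psi((n+1)K^{n+1}) \leq \frac{1}{2^5 F^{n+1}}\sqrt{\frac{m}{\varepsilon}}. \]
Setting $K' = (n+1)K^{n+1}$, this is equivalent to $K'\,\Psi(K') \leq \frac{n+1}{2^5 F^{n+1}}\sqrt{m/\varepsilon}$, which, by the definition~\eqref{funcdelta} of $\Delta$ and the monotonicity of $K'\Psi(K')$, holds precisely when $K' \leq \Delta_\varepsilon$. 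This forces the choice $K = K_\varepsilon = (\Delta_\varepsilon/(n+1))^{1/(n+1)}$, and the smallness hypothesis $\varepsilon \leq (n+1)^2 m|\alpha|_\infty^2/(2^{10}F^{2(n+1)})$ is precisely what guarantees $\Delta_\varepsilon \geq \Psi(1)^{-1}\cdot|\alpha|_\infty^{-1}\cdot\ldots \geq 1$ and hence $K_\varepsilon \geq 1$.

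With $K = K_\varepsilon$, the corresponding upper bound on $r_\Lambda$, obtained by taking $d=n$ and $|\tilde{\Lambda}|=1$, is exactly $R_\varepsilon$, so Proposition~\ref{propstab1} and Proposition~\ref{propstab2} uniformly yield $\|I(t)-I_0\| \leq R_\varepsilon$. The associated timescale is of the form $T_\Lambda\exp(K_\varepsilon s_0/6)$, and the uniform lower bound $T_\Lambda \geq 3s_0/\Omega$ is obtained exactly as in the proof of Theorem~\ref{thm1}, by using the smallness condition $\varepsilon \leq mr_\Lambda^2/2^{10}$ to simplify the prefactors $\frac{s_0 r_\Lambda}{5\varepsilon}$ and $\frac{ms_0 r_\Lambda^2}{288\Omega\varepsilon}$ appearing in Propositions~\ref{propstab1}~and~\ref{propstab2}. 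The main obstacle is purely computational: translating the smallness condition on $\varepsilon$ into the right constraint on $K$ through the inverse-type function $\Delta$. No new ideas beyond those of Theorem~\ref{thm1} are required; the entire argument is a direct rerun with $\gamma^{-1}K^\tau$ replaced by $\Psi(K)$ and~\eqref{casdio} implying that the Diophantine statement Theorem~\ref{thm1} is recovered as a special case.
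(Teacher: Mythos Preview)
Your proposal is correct and follows exactly the route indicated in the paper, which does not spell out a proof but simply says to rerun the argument of Theorem~\ref{thm1} with Proposition~\ref{covering2} in place of Proposition~\ref{covering}. Your identification of the lower bound $r_\Lambda \geq (F^{n+1}K^{n+1}\Psi((n+1)K^{n+1}))^{-1}$ (worst case $d=0$, using $|\tilde\Lambda|\leq K^d$) and the upper bound $r_\Lambda \leq R_\varepsilon$ (best case $d=n$, $|\tilde\Lambda|\geq 1$) is precisely what drives the choice $K=K_\varepsilon$ and the confinement radius $R_\varepsilon$.

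One small correction: your sentence about the smallness hypothesis is garbled. The condition $\varepsilon \leq (n+1)^2 m|\alpha|_\infty^2/(2^{10}F^{2(n+1)})$ is exactly equivalent to $\frac{n+1}{2^5F^{n+1}}\sqrt{m/\varepsilon} \geq |\alpha|_\infty^{-1} = \Psi(1)$, which is what is needed for $\Delta_\varepsilon$ to be \emph{defined} (recall $\Delta(x)$ is only defined for $x \geq \Psi(1)$). It does \emph{not} by itself guarantee $K_\varepsilon \geq 1$; the conditions $K_\varepsilon \geq 1$ and $R_\varepsilon \leq r_0$ are separate explicit hypotheses of the theorem, playing the role of $\varepsilon\leq\varepsilon_0$ and $\varepsilon\leq\varepsilon_*$ in Theorem~\ref{thm1}. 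With that clarification, the argument goes through.
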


When $\alpha$ is Diophantine, then~\eqref{casdio} holds true and the above statement exactly reduces to Theorem~\ref{thm1}. There also analogues of Theorem~\ref{thm2} and Theorem~\ref{thm3} in the above context that can be stated and proved the same way; details are left to the reader. 

\section{Concluding remarks}\label{s8}

We conclude this paper with several remarks. We first discuss the Chirikov-Lochak conjecture, and then the possibility of extending our results to less regular Hamiltonians, more general classes of integrable Hamiltonians and finally more general time-dependence.

First let us recall that the conjecture of Chirikov-Lochak predicts that
\begin{equation}\label{expchi2}
a=b=\frac{1}{2(n+1+\tau)} 
\end{equation}
while we proved 
\[ a=\frac{1}{2(n+1)(1+\tau)}, \quad b=\frac{(n+1)\tau+1}{2(n+1)(1+\tau)}. \]
For $\tau=0$, the exponents are the same but not for $\tau\geq 1$. However there is certainly no contradiction here; our result yields a better confinement but on a worse time-scale. Chirikov initial conjecture is ultimately based on the \textit{ansatz}
\begin{equation}\label{ansatz}
|(k,l)\cdot(\omega,\alpha)|\sim \frac{1}{K^{n+m-1}}, \quad |k|+|l|\leq K,
\end{equation}
for an arbitrary fixed vector $\alpha\in\R^m$, leading to the exponents
\begin{equation}\label{expchi}
a=b=\frac{1}{2(n+m)}.
\end{equation} 
We refer to~\cite{CV96}, Equation $(1.8)$ for instance.  When $m=0$ or $m=1$, that is when the perturbation is autonomous or time-periodic, vectors $\omega \in \R^n$ satisfying the above condition~\eqref{ansatz} do exist: they are called badly approximable, they are the ``best" non-resonant vectors and they form a dense set but of zero Lebesgue measure. In those two cases $m=0$ and $m=1$, the exponents~\eqref{expchi} turned out to be correct, but proofs do not use existence (nor density) of those badly approximable vectors: on the contrary, in the work of Lochak it is the ``worst" resonant vectors, namely periodic vectors, that play the major role. When $m \geq 2$, as observed by Lochak in \cite{LMS}, the ansatz~\eqref{ansatz} and hence the exponents~\eqref{expchi} are clearly too optimistic: one should require $\alpha$ to be Diophantine with some exponent $\tau \geq m-1$, and replace~\eqref{ansatz} by
\begin{equation}\label{ansatz2}
|(k,l)\cdot(\omega,\alpha)|\sim \frac{1}{K^{n+\tau}}, \quad |k|+|l|\leq K,
\end{equation}
so that the exponents of~\eqref{expchi} become those of~\eqref{expchi2}. Vectors $\omega \in \R^n$ satisfying~\eqref{ansatz2} do exist, they form a dense set which is furthermore of full Lebesgue measure when $\tau>m-1$ (when $\tau=m-1$ they have zero Lebesgue measure). In our opinion, it is a very interesting open problem to derive first these exponents when $m=0$ or $m=1$ using badly approximable vectors, to see if and how one can try to use vectors satisfying~\eqref{ansatz2} to possibly reach the values~\eqref{expchi2}. 

Then, let us comment on the regularity assumption in the results we proved. The assumptions that $h$ and $f$ are real-analytic were only used in the normal form Lemma~\ref{normal}, which is taken from~\cite{Pos93}. So in order to have results for Hamiltonians which are not real-analytic, for instance Hamiltonians which are only Gevrey regular or finitely differentiable, one just needs a version of Lemma~\ref{normal} in those settings, and this appears to be only a problem of technical nature. One can find in \cite{Bou13a} and \cite{Bou13b} normal form results for non-analytic Hamiltonians in the spirit of Lemma~\ref{normal}, even though those statements do not recover Lemma~\ref{normal}. Let us also recall that in the autonomous case (or time-periodic case when $h$ is convex), Nekhoroshev type estimates are known for Gevrey or finitely differentiable Hamiltonians (\cite{MS02}, \cite{Bou10}, \cite{Bou11}) but all those proofs are based on the Lochak method. 

Next let us discuss the more interesting question of whether our results extend to steep integrable Hamiltonians, the original class of integrable Hamiltonians considered by Nekhoroshev. One should first recall that in the general steep case, unlike what happens in the convex case, motions near resonances are not necessarily confined: if they are not, the steepness property ensures that they evolve towards a less resonant domain and, eventually, end up in a non-resonant domain on which an integrable normal form (up to a small remainder) can be constructed, leading to the stability of the action variables. The issue is that for a quasi-periodic perturbation, the non-resonant domain might be empty so the extension to steep integrable Hamiltonians does not seem straightforward. Indeed, consider in fact a time-periodic perturbation of the simplest class of steep Hamiltonians, namely quasi-convex Hamiltonians. Furthermore let's look at the simplest quasi-convex Hamiltonian, which is certainly
\[ h(I_1,\dots,I_n)=\frac{1}{2}(I_1^2+\cdots+I_{n-1}^2)+I_n. \] 
After a periodic perturbation the extended integrable Hamiltonian then reads
\[ \bar{h}(I_1,\dots,I_n,J)=\frac{1}{2}(I_1^2+\cdots+I_{n-1}^2)+I_n+J \] 
and frequencies associated to $\bar{h}$ are all resonant (as the last two components of any frequency vectors are both equal to $1$). Thus for such a Hamiltonian, integrable normal forms cannot be obtained (and even ``almost" integrable normal forms, in which the dependence on the angle associated to $J$ is allowed, as one can easily check) and to prove stability, if any, one should follow a different path than the one of Nekhoroshev.

To conclude, let us mention that it seems quite unlikely to have a non-trivial stability result for an arbitrary time-dependent perturbation, unless the time depends on the small parameter, in which case the conjugated action variable can be considered as degenerate (see \cite{GZ92} or \cite{Bou13CRAS}). But it may be possible to extend our results for a class of perturbation whose Fourier transform (with respect to time) has suitable localization properties.

\addcontentsline{toc}{section}{References}
\bibliographystyle{amsalpha}
\bibliography{NekhoQP2}

\end{document}